\documentclass[11pt]{amsart}

\usepackage{amsmath, amsthm, amssymb} 

\usepackage{hyperref}

\usepackage{graphicx}



\newtheorem{theorem}{Theorem}[section]
\newtheorem{proposition}[theorem]{Proposition}
\newtheorem{lemma}[theorem]{Lemma}

\newtheorem{corollary}[theorem]{Corollary}

\newtheorem*{claim}{Claim}

\theoremstyle{definition}
\newtheorem{definition}[theorem]{Definition}

\theoremstyle{remark} 
\newtheorem{remark}[theorem]{Remark}

\title[Van Lambalgen's Theorem for uniformly relative randomness]{Van Lambalgen's Theorem for uniformly relative Schnorr and computable randomness}
\author{Kenshi Miyabe and Jason Rute}

\newcommand{\citelow}[2][def]{\cite[#1]{#2}} 

\newcommand{\df}[1]{\emph{\textbf{#1}}}

\newcommand{\altmid}{\,:\,}

\begin{document}

\begin{abstract}
We correct Miyabe's proof of van Lambalgen's theorem for truth-table Schnorr randomness (which we will call uniformly relative Schnorr randomness).  An immediate corollary is one direction of van Lambalgen's theorem for Schnorr randomness.  It has been claimed in the literature that this corollary (and the analogous result for computable randomness) is a ``straightforward modification of the proof of van Lambalgen's theorem."  This is not so, and we point out why.
We also point out an error in Miyabe's proof of van Lambalgen's theorem for truth-table reducible randomness (which we will call uniformly relative computable randomness).  While we do not fix the error, we do prove a weaker version of van Lambalgen's theorem where each half is computably random uniformly relative to the other.
We also argue that uniform relativization is the correct relativization for all randomness notions.
\end{abstract}

\maketitle

\section{Introduction}
\label{sec:Intro}

Recall van Lambalgen's theorem.

\begin{theorem}[van Lambalgen \citelow{Lambalgen:1990yq}]\label{thm:VL}
$A \oplus B$ is Martin-L{\"o}f random if and only if $A$ is Martin-L{\"o}f  random and $B$ is Martin-L{\"o}f  random relative to $A$.
\end{theorem}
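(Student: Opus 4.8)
The plan is to prove the two directions separately, exploiting the fact that the join $A \oplus B$ corresponds to the product measure on $2^\omega \times 2^\omega$ under the identification $X \oplus Y \leftrightarrow (X,Y)$, so that a Martin-L\"of test on $2^\omega$ transfers to one on $2^\omega \times 2^\omega$ and Fubini's theorem applies to the uniform measure $\mu$. Throughout I would argue each implication by contraposition.

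For the forward direction ($A \oplus B$ random $\Rightarrow$ $A$ random and $B$ random relative to $A$) there are two conjuncts. That $A$ is random is the easy half: given a test $\{U_n\}$ covering $A$, the cylinders $U_n \times 2^\omega$, read in join coordinates, form a test of the same measure whose intersection contains $A \oplus B$. For ``$B$ random relative to $A$'' I would suppose the contrary and take an oracle Martin-L\"of test $\{U_n^X\}$, uniform in the oracle $X$, with $B \in \bigcap_n U_n^A$. The key move is to integrate out the oracle: the set $W_n = \{X \oplus Y : Y \in U_n^X\}$ is uniformly effectively open, and by Fubini $\mu(W_n) = \int \mu(U_n^X)\, d\mu(X) \le 2^{-n}$, so $\{W_n\}$ is a plain Martin-L\"of test. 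Since $A \oplus B \in \bigcap_n W_n$, it is not random. Note that this half uses no randomness assumption on $A$ at all.

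For the reverse direction ($A$ random and $B$ random relative to $A$ $\Rightarrow$ $A \oplus B$ random) I would assume $A \oplus B$ is not random while $A$ is random, and produce an $A$-relative test covering $B$. Fix a nested plain Martin-L\"of test $\{U_n\}$ with $A \oplus B \in \bigcap_n U_n$, and regard each $U_n \subseteq 2^\omega \times 2^\omega$. Consider the fibers $U_n^X = \{Y : X \oplus Y \in U_n\}$, which are uniformly effectively open relative to $X$. The obstacle is that although $\int \mu(U_n^X)\, d\mu(X) \le 2^{-n}$ by Fubini, the single fiber $U_n^A$ need not have small measure. To control this I would apply a Markov-type estimate: the bad-oracle set $S_n = \{X : \mu(U_n^X) > 2^{-n/2}\}$ has $\mu(S_n) \le 2^{-n/2}$, and since $\mu(U_n^X)$ is lower semicomputable uniformly in $X$, the event $\mu(U_n^X) > 2^{-n/2}$ is effectively open, so each $S_n$ is uniformly effectively open. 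Forming the nested tails $G_m = \bigcup_{n \ge 2m} S_n$ gives a genuine (Solovay-packaged) Martin-L\"of test, which $A$ escapes; hence $\mu(U_n^A) \le 2^{-n/2}$ for all sufficiently large $n$. Reindexing, the fibers $\{U_n^A\}$ then form an $A$-relative Martin-L\"of test whose intersection still contains $B$, contradicting the randomness of $B$ relative to $A$.

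The step I expect to be the main obstacle is exactly the control of the fiber measures in the reverse direction: $\mu(U_n^X)$ is only lower semicomputable rather than computable, so one cannot simply threshold it as one would a computable function. The saving point is that lower semicomputability already suffices to make the threshold sets $S_n$ effectively open, so that $\{S_n\}$ remains a legitimate Martin-L\"of test and the randomness of $A$ can do the work. Making this semicomputability precise uniformly in the oracle, and checking that the various reindexings and the tail-union $G_m$ preserve the required measure bounds, is where the real care lies; the forward direction, by contrast, is a clean application of Fubini together with the cylinder lift.
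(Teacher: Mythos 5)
Your argument is correct, and it is worth noting that the paper itself offers no proof of Theorem~\ref{thm:VL} --- it only cites van Lambalgen --- so the natural comparison is with the method the authors adapt for the Schnorr analogues. Their template (Theorems~\ref{thm:VL-Schnorr} and~\ref{thm:VL-tt-Schnorr}) runs through integral tests: from a test covering $A \oplus B$ one passes to an integral test $t$ with $t(A\oplus B)=\infty$, sets $u(X)=\int t^X\,d\mu$, and applies Fubini; for Martin-L{\"o}f randomness the fact that $u$ is itself an integral test, hence finite at the random $A$, immediately makes $t^A$ an $A$-relative integral test. Your threshold sets $S_n=\{X : \mu(U_n^X)>2^{-n/2}\}$ play exactly the role of $u$, and you have correctly isolated the crux: for Martin-L{\"o}f randomness one only needs \emph{smallness} of the fiber measures $\mu(U_n^A)$, which Markov's inequality plus the effective openness of $S_n$ (from lower semicomputability of $X\mapsto\mu(U_n^X)$) delivers, whereas for Schnorr randomness one needs \emph{computability from $A$}, which is precisely where the paper's key lemma (Lemma~\ref{lem:key-lemma}) enters and why the authors stress that the adaptation is not straightforward. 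The integral-test route buys a one-line Fubini computation in place of your threshold-and-tail bookkeeping; your route is more elementary and stays entirely within the test formulation.

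Two normalizations deserve explicit mention. In the forward direction, the Fubini step $\mu(W_n)=\int\!\mu(U_n^X)\,d\mu(X)\le 2^{-n}$ requires $\mu(U_n^X)\le 2^{-n}$ for \emph{every} oracle $X$, whereas a relativized Martin-L{\"o}f test need only satisfy the bound at the actual oracle $A$; one first applies the truncation trick the paper records in Section~\ref{sub:rmk-on-KL-ML} (stop enumerating basic open sets into $U_n^X$ if doing so would push the measure above $2^{-n}$), which leaves $U_n^A$ unchanged since the bound is never violated there. In the reverse direction, $2^{-n/2}$ is irrational for odd $n$, so use a rational threshold or only even indices, and note that $\mu(G_m)\le\sum_{n\ge 2m}2^{-n/2}=c\,2^{-m}$ with $c=(1-2^{-1/2})^{-1}>1$, so $\langle G_m\rangle$ is a legitimate test only after a constant index shift; your ``Solovay-packaged'' parenthetical covers this, but it should be carried out, since $A$ escaping this test is exactly what licenses $\mu(U_n^A)\le 2^{-n/2}$ for all $n\ge 2m$ and hence the reindexed $A$-relative test covering $B$. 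With these routine repairs the proof is complete.
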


Merkle et al.\ \citelow{Merkle:2006vn} showed that the ``$\Rightarrow$" direction of van Lambalgen's theorem does not hold for Schnorr or computable randomness.  This has been extended by Yu \citelow{Yu:2007rt}, Kjos Hanssen \citelow[Remark~3.5.22]{Nies:2009qf}, Franklin and Stephan \citelow{Franklin:2011fr}, and Miyabe \citelow{Miyabe:2011zr}.

In \citelow{Miyabe:2011zr} the first author claimed that van Lambalgen's theorem does in fact hold for Schnorr randomness if the usual notion of relativized Schnorr randomness is replaced with the weaker notion of uniformly relative Schnorr randomness---previously called truth-table Schnorr randomness in \citelow{Franklin:2010ys} and \citelow{Miyabe:2011zr}.

\begin{theorem}\label{thm:VL-tt-Schnorr-intro}
$A \oplus B$ is Schnorr random if and only if $A$ is Schnorr random and $B$ is Schnorr random uniformly relative to $A$.
\end{theorem}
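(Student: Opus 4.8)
The plan is to prove both directions through the test (measure) characterization of Schnorr randomness, using throughout the notation $U^{\alpha}_n = \{\beta : \alpha\oplus\beta\in U_n\}$ for the $\alpha$-section of an open set $U_n\subseteq 2^{\omega}$, together with the Fubini identity $\mu(U_n)=\int \mu(U^{\alpha}_n)\,d\alpha$. Recall that a Schnorr test is a uniformly c.e.\ sequence $(U_n)$ of open sets with $\mu(U_n)\le 2^{-n}$ and with $\mu(U_n)$ uniformly computable, and that $(U^{\alpha}_n)$ is a uniform (truth-table) Schnorr test relative to $\alpha$ when it is uniformly c.e.\ in $\alpha$ and the section measures $\mu(U^{\alpha}_n)$ are computable uniformly in $\alpha$ and $n$.

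For the forward direction, assume $A\oplus B$ is Schnorr random. That $A$ is Schnorr random is immediate: any Schnorr test $(U_n)$ capturing $A$ pulls back along the projection $\pi_{0}$ onto the even coordinates to the test $\{\gamma:\pi_{0}(\gamma)\in U_n\}$, which is again a Schnorr test (its measure equals $\mu(U_n)$, hence is computable) and captures $A\oplus B$. To see that $B$ is Schnorr random uniformly relative to $A$, I argue by contraposition: given a uniform Schnorr test $(U^{\alpha}_n)$ with $B\in\bigcap_n U^{A}_n$, form the product sets $W_n=\{\alpha\oplus\beta : \beta\in U^{\alpha}_n\}$. These are uniformly c.e.\ and, by Fubini, $\mu(W_n)=\int\mu(U^{\alpha}_n)\,d\alpha\le 2^{-n}$; crucially, this integral is a \emph{computable} real precisely because the family is uniform, so that $\alpha\mapsto\mu(U^{\alpha}_n)$ is computable with a uniform modulus and may be integrated effectively. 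Thus $(W_n)$ is a genuine Schnorr test capturing $A\oplus B$, a contradiction. This is exactly the step that fails for ordinary relativization---there $\alpha\mapsto\mu(U^{\alpha}_n)$ need only be lower semicomputable, the integral need not be computable, and $(W_n)$ need not be a Schnorr test---which is why uniformity is indispensable and why the counterexample of Merkle et al.\ does not contradict the theorem.

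For the reverse direction, assume $A$ is Schnorr random and $B$ is Schnorr random uniformly relative to $A$, and suppose toward a contradiction that a Schnorr test $(U_n)$ captures $A\oplus B$. The classical (Martin-L\"of) strategy is a Markov argument: put $V_n=\{\alpha:\mu(U^{\alpha}_{2n})>2^{-n}\}$, so that $\mu(V_n)\le 2^{-n}$; if $A\in\bigcap_n V_n$ then $A$ is not random, and otherwise $\mu(U^{A}_{2n})\le 2^{-n}$ for all large $n$, exhibiting $(U^{A}_{2n})_n$ as a test for $B$ relative to $A$. I expect this to be the main obstacle, and it is exactly where a ``straightforward modification'' breaks down: for Schnorr randomness $(V_n)$ must have \emph{computable} measure and $(U^{A}_{2n})_n$ must be a genuine \emph{uniform} Schnorr test, yet the section measure $\alpha\mapsto\mu(U^{\alpha}_{n})$ is a priori only lower semicomputable, so neither the superlevel set $V_n$ nor the residual test is automatically of Schnorr type. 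My plan to overcome this is to certify the relevant measures using clopen inner approximations: since $\mu(U_n)$ is computable I can compute clopen sets $F_{n,k}\subseteq U_n$ with $\mu(U_n\setminus F_{n,k})\to 0$ effectively, whose section measures $\alpha\mapsto\mu(F^{\alpha}_{n,k})$ depend on finitely many bits of $\alpha$ and are therefore computable uniformly in $\alpha$ with $\int(\mu(U^{\alpha}_n)-\mu(F^{\alpha}_{n,k}))\,d\alpha\to 0$ computably. I would then run the Markov argument using these computable approximants (so that the bad sets $V_n$ become clopen with computable measure, a bona fide Schnorr test for $A$), and use the Schnorr randomness of $A$---via the fact that a Schnorr random real is an effective Lebesgue point for such an effectively $L^1$-convergent computable sequence---to obtain a computable modulus for $\mu(U^{A}_n)$ along $A$. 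This upgrades the residual c.e.-open sections $(U^{A}_n)$, which already capture $B$, into a legitimate uniform Schnorr test relative to $A$, contradicting that $B$ is Schnorr random uniformly relative to $A$ and completing the argument.
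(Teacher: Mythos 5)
Your forward direction is sound and is essentially the standard argument (which the paper does not reprove, citing Miyabe's original correct proof): total computability of $X\mapsto\Phi(X)$ makes $\alpha\mapsto\mu(U_n^\alpha)$ a genuinely computable function on the compact space $2^\omega$, so $\mu(W_n)=\int\mu(U_n^\alpha)\,d\alpha$ is computable and $(W_n)$ is a Schnorr test. The reverse direction, however, has two genuine gaps, sitting exactly where the paper does its real work. First, the step you label ``a Schnorr random real is an effective Lebesgue point for such an effectively $L^1$-convergent computable sequence'' is asserted, not proved, and the obvious attempt fails: the Chebyshev exceptional sets $\{\alpha \mid \mu(U_n^\alpha)-\mu(F_{n,k}^\alpha)>\varepsilon\}$ are $\Sigma^0_1$ with small measure, but their measures are in general \emph{not computable} (the superlevel set of a lower semicomputable function at a fixed threshold can have an atom on its boundary), so they assemble only into a Martin-L\"of test, which a Schnorr random $A$ need not avoid. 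Closing this is the entire content of the paper's key lemma (Lemma~\ref{lem:key-lemma}): one shows the section-measure function is $L^1$-computable, that the distribution of the error $t-f_n$ is a computable measure (Proposition~\ref{prop:dist-comp}), and then uses an effective Baire category argument to select thresholds $c_n$ at which the distribution has no atom, which is what makes the exceptional sets' measures computable and the bad sets a genuine Schnorr test. Your clopen approximants $F_{n,k}$ are precisely the analogue of the paper's partial sums $f_n$, but without the atom-avoiding threshold selection (or an explicit citation of an equivalent layerwise-computability theorem) the ``bona fide Schnorr test'' claim is unjustified.

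Second, and decisively: even granting a computable-from-$A$ modulus for $\mu(U_n^A)$, your final sentence (``this upgrades the residual sections into a legitimate uniform Schnorr test'') does not follow. A uniform Schnorr test is a \emph{total} computable functional $X\mapsto(\text{Schnorr-test code})$ that must output a valid test on \emph{every} oracle $X$, including non-random ones; computability of the section measures at the single point $A$ (or even layerwise on all Schnorr randoms) provides no such totalization. Indeed the paper's Section~\ref{sec:equiv} (Proposition~\ref{prop:bad-tt-test}) shows concretely that ``a test with code and measures computable from $A$'' characterizes \emph{plain} relative Schnorr randomness, which is strictly stronger as a non-randomness conclusion is concerned: your argument as written only yields that $B$ is not Schnorr random \emph{relative to} $A$ (the conclusion of Theorem~\ref{thm:VL-Schnorr}), not that $B$ fails a \emph{uniform} test. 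The paper closes this gap with an explicit construction: the key lemma supplies a \emph{total computable} minorant $h\le u$ (where $u(X)=\int t^X\,d\mu$) with $h(A)=u(A)$, and the test is then truncated so that $\int\hat{t}^X\,d\mu=h(X)$ for \emph{all} $X$, making the family uniform while the truncation is vacuous at $X=A$. Note also that the paper works with a single integral test $t$ rather than your sequence $(U_n)$, so one application of the key lemma and one truncation suffice; your route would additionally need uniformity in $n$ and a combined bad-set test. Some such totalization-by-truncation step is indispensable and is entirely missing from your proposal.
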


However, the proof given was incorrect and we provide a corrected proof.  Our proof follows a standard proof of van Lambalgen's theorem using integral tests, except at the difficult point we apply a key lemma, which can be seen as an effective version of Lusin's theorem for a particular setting.  Lusin's theorem, one of Littlewood's three basic principles of measure theory, is the basis behind the layerwise-computability framework that has been successively employed by Hoyrup, Rojas and others to relate algorithmic randomness and computable analysis.

The structure of the paper is as follows.  In Section~\ref{sec:key-lemma}, we prove the key lemma and some corollaries.

In Section~\ref{sec:vL-for-Schnorr}, as a warm-up, we show how our key lemma can be used to prove the ``$\Leftarrow$" direction of van Lambalgen's theorem for Schnorr randomness.  (A different proof was given recently by Franklin and Stephan \citelow{Franklin:2011fr}.)  Yu \citelow{Yu:2007rt} had claimed  that ``the [$\Leftarrow$] direction of van Lambalgen's theorem is true for both Schnorr randomness and computable randomness. [...] The proof is just a straightforward modification of the proof of van Lambalgen's theorem."  Downey and Hirschfelt \citelow{Downey:2010ve} had made similar claims.  Unfortunately, the proofs are not so straightforward, and we explain why in the case of Schnorr randomness.

In Section~\ref{sec:tt-Schnorr}, we define uniformly relative Schnorr randomness, and prove van Lambalgen's theorem for this notion of randomness.

In Section~\ref{sec:comp-rand}, we discuss uniformly relative computable randomness---previously called truth-table reducible randomness.  We remark that Miyabe's \citelow{Miyabe:2011zr} proof of van Lambalgen's theorem for uniformly relative computable randomness is not correct in the ``$\Leftarrow$" direction.  While we do not provide a correction, we do prove the following weaker result.

\begin{theorem}
$A \oplus B$ is computably random if and only if each of $A$ and $B$ are computably random uniformly relative to the other.
\end{theorem}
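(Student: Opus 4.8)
The plan is to prove both directions by working directly with computable martingales, using the standard factorization of a martingale on the product space $A\oplus B$ into the \emph{conditional} bets it places on the $A$-coordinates and on the $B$-coordinates. Throughout I identify $A\oplus B$ with the sequence whose bit at position $2n$ is $A(n)$ and whose bit at position $2n+1$ is $B(n)$, and I use the characterization of computable randomness by computable martingales, where success means $\limsup_n d(X\upharpoonright n)=\infty$. After replacing $d$ by $(d+1)/2$ I may assume every martingale in sight is strictly positive, so that the quotients below are defined; this, together with the usual exact-versus-approximate computation issues, I treat as routine.

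For the ``$\Leftarrow$'' direction I argue by contraposition. Given a computable martingale $d$ that succeeds on $A\oplus B$, I split it into its $A$-part and its $B$-part. For a string $\sigma$ of length $n$ set $e^{B}(\sigma)=\prod_{k<n} d(w_k\sigma(k))/d(w_k)$, where $w_k$ is the combined string interleaving $\sigma\upharpoonright k$ against $B\upharpoonright k$; this is the accumulated factor by which $d$ multiplies its capital when betting on the $A$-coordinates. A direct computation using the martingale identity for $d$ shows $e^{B}$ is itself a martingale in $\sigma$, and to evaluate it at a string of length $n$ one needs only $B\upharpoonright n$ and finitely many values of $d$, so $e^{B}$ is computable uniformly (indeed with oracle use bounded by the length) relative to $B$. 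Symmetrically, the accumulated factor $f^{A}$ from the bets on the $B$-coordinates is a martingale computable uniformly relative to $A$. The point of the splitting is the identity $d(A\oplus B\upharpoonright 2n)=d(\varnothing)\,e^{B}(A\upharpoonright n)\,f^{A}(B\upharpoonright n)$. If $d$ succeeds, the left-hand side is unbounded, hence the product of the two nonnegative factors is unbounded, whence at least one factor is unbounded along its sequence. Thus $e^{B}$ succeeds on $A$ or $f^{A}$ succeeds on $B$, contradicting either that $A$ is computably random uniformly relative to $B$ or that $B$ is computably random uniformly relative to $A$.

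For the ``$\Rightarrow$'' direction it suffices, by the symmetry of the hypothesis and of $A\oplus B$ versus $B\oplus A$, to show that $A\oplus B$ computably random implies $B$ is computably random uniformly relative to $A$. Again by contraposition, suppose a martingale $g$, computable uniformly relative to $A$, succeeds on $B$. I build a single computable martingale $d$ on $A\oplus B$ that declines to bet on the $A$-coordinates and, on the $B$-coordinates, copies (as far as the already-exposed part of $A$ allows) the conditional bets prescribed by $g^{A}$. The one thing to check is that the oracle use of $g^{A}$ when betting on $B\upharpoonright k$ is supplied by the initial segment of $A$ that the interleaving has revealed at that stage; this is exactly what uniform (truth-table) relativization guarantees, since the use is bounded by a computable function of $k$ and can be aligned with the stage of the construction, absorbing any bounded overhang into a harmless multiplicative constant, as in the estimate $d(A\oplus B\upharpoonright 2k)\ge 2^{-c}\,g^{A}(B\upharpoonright k)$. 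Then $d$ succeeds on $A\oplus B$, contradicting its computable randomness.

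I expect the crux to be the case analysis in the ``$\Leftarrow$'' direction. The factorization tells me that the product $e^{B}(A\upharpoonright n)\,f^{A}(B\upharpoonright n)$ diverges, but it does not tell me \emph{which} factor is responsible, and the two factors consult different oracles. This is precisely the gap in the attempted proof of the asymmetric theorem: there one may only assume $A$ computably random (unrelativized), so a divergence coming from $e^{B}$---a martingale that genuinely reads $B$---cannot be ruled out. The symmetric hypothesis is tailored to close exactly this gap, since it lets me discard \emph{either} possibility. I therefore anticipate no further obstacle beyond bookkeeping once the splitting and its uniform-relativization estimates are in place.
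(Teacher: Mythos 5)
Your ``$\Leftarrow$'' direction is correct and is essentially the paper's own proof: the accumulated conditional-bet factors $e^{B}$ and $f^{A}$ are exactly the paper's uniform martingale tests $d^{B}_0$ and $d^{A}_1$ defined via the multiplicative representation $\widetilde{d}$, your product identity $d((A\oplus B)\upharpoonright 2n)=d(\varnothing)\,e^{B}(A\upharpoonright n)\,f^{A}(B\upharpoonright n)$ is the paper's central computation, and the case split (one factor must be unbounded, and the symmetric hypothesis lets you discard either case) is precisely the point; your closing diagnosis of why the asymmetric version resists this argument also matches the paper's footnote. One small check you gesture at but should make explicit: since $d$ is total computable and strictly positive, the formula for $e^{X}$ defines a genuine martingale for \emph{every} oracle $X$ (the quotient structure gives $e^{X}(\sigma 0)+e^{X}(\sigma 1)=2e^{X}(\sigma)$ unconditionally), so $\langle e^{X}\rangle_{X\in 2^\omega}$ really is a uniform martingale test in the sense of the definition; this is what fails for other test types and is not automatic.

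Your ``$\Rightarrow$'' direction, however, has a genuine gap. You claim the oracle use of the uniform test $\langle g^{X}\rangle$ ``can be aligned with the stage of the construction, absorbing any bounded overhang into a harmless multiplicative constant $2^{-c}$.'' But uniform (truth-table) relativization only gives a use bounded by a \emph{computable} function $u(k)$, not by $k+O(1)$; nothing prevents $u(k)=2^{k}$, so the overhang $u(k)-k$ is unbounded and no fixed constant $c$ exists. The deeper obstruction is structural: a monotone martingale on the join must bet on (or pass) bit $B(k)$ at position $2k+1$, having seen only $A\upharpoonright(k+1)$, and it can never revisit that bit once more of $A$ is revealed---this is exactly the phenomenon that makes the analogous direction \emph{fail} for plain relative computable randomness (Merkle et al.), and it is why the nonmonotonic (Kolmogorov--Loveland) case is easy while the monotone case is not. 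Hedging over the unread $u(k)-k$ bits of $A$ costs an unbounded factor, and skipping the bets you cannot compute may discard exactly the bets responsible for $g^{A}$'s success. The paper does not reprove this direction at all; it invokes the (correct) ``$\Rightarrow$'' argument from Miyabe's earlier work, which avoids martingale simulation entirely by passing to a test characterization of computable randomness (bounded Martin-L{\"o}f tests with a bounding measure) and integrating over the oracle: from a uniform test $\langle W^{X}_n\rangle$ with bounding measures $\nu^{X}$ one forms $V_n=\{X\oplus Y \,:\, Y\in W^{X}_n\}$ with bounding measure $\nu(\sigma\oplus\tau)$ obtained by averaging $\nu^{X}(\tau)$ over $X\in[\sigma]$; uniformity is used precisely to make this averaged bound computable. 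So your plan needs this (or a comparable) replacement for the simulation step before the ``$\Rightarrow$'' half stands.
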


This weakening of van Lambalgen's theorem is also known to hold for Kolmogorov-Loveland randomness. We leave as an open question the ``$\Leftarrow$" direction of van Lambalgen's theorem for both computable randomness and uniformly relative computable randomness.  We conjecture that it is false for both.

Finally, in Section~\ref{sec:equiv} we prove that Franklin and Stephan's \citelow{Franklin:2010ys} truth-table Schnorr randomness is equivalent to our uniformly relative Schnorr randomness.  The difference in terminology reflects the difference in definitions, and we discuss why the Franklin and Stephan definition, which uses truth-table reducibility, is very sensitive to the choice of test used.

We believe this paper gives a strong argument that uniform relativization (as in Definition~\ref{def:Uniform-Schnorr}) is the correct method to relativize a randomness notion (in contrast to the usual method of relativization).  It is a natural definition that can be applied to all the standard randomness notions.  Moreover, uniformly relative Martin-L{\"o}f randomness is equivalent to the usual relative Martin-L{\"o}f randomness (Section~\ref{sub:rmk-on-KL-ML}). Uniformly relative Schnorr randomness not only satisfies van Lambalgen's theorem, but also has well-behaved lowness properties \citelow{Franklin:2010ys}.  Furthermore, it is not difficult to see that uniformly relative Demuth randomness is equivalent to Demuth$_\text{BLR}$ randomness (see \citelow{Bienvenu:2013cr}, \citelow{Diamondstone:2013dq}), which also satisfies van Lambalgen's theorem \citelow{Diamondstone:2013dq} and has natural lowness properties \citelow{Bienvenu:2013cr}.  Indeed we suggest that if one wishes to explore either van Lambalgen's theorem or low-for-randomness with respect to other randomness notions, one should use uniform relativization.

\section{The key lemma}
\label{sec:key-lemma}

For this paper, we will work in $2^\omega$ with the fair-coin measure $\mu$.  Recall the following definitions.  The reader may wish to consult the books \citelow{Downey:2010ve, Nies:2009qf} for further background.

\begin{definition}\label{def:open}
A set $U \subseteq 2^\omega$ is \df{open} if it is a countable union of \df{basic open sets}, i.e.\ sets of the form $[\sigma]:=\{X\in 2^\omega \altmid X \succ \sigma\}$ for some $\sigma \in 2^{<\omega}$ as well as the empty subset $\varnothing \subset 2^\omega$.  A \df{code} for an open set $U\subseteq 2^\omega$ is a sequence $\langle C_s \rangle_{s\in \mathbb{N}}$ of basic open sets such that $U = \bigcup_s C_s$.  A set $U \subseteq 2^\omega$ is $\Sigma^0_1$, or \df{effectively open}, if it is open with a computable code.
\end{definition}

\begin{definition}
A \df{Martin-L{\"o}f test} is a uniform sequence $\langle U_n\rangle$ of $\Sigma^0_1$ subsets of $2^\omega$ such that $\mu(U_n) \leq 2^{-n}$.  A \df{Schnorr test} is a Martin-L{\"o}f test $\langle U_n\rangle$ such that $\mu(U_n)$ is uniformly computable in $n$.  A set $X \in 2^\omega$ is said to be \df{covered by} a Martin-L{\"o}f (Schnorr) test $\langle U_n\rangle$ if $X\in \bigcap_n U_n$.  The set $X \in 2^\omega$ is said to be \df{Martin-L{\"o}f} (resp.\ \df{Schnorr}) \df{random} if $X$ is not covered by any Martin-L{\"o}f (resp.\ Schnorr) test.
\end{definition}

\begin{definition}
A function $f\colon 2^\omega \rightarrow [0,\infty]$ is \df{lower semicomputable} if there is a uniform sequence of total computable functions $g_n \colon 2^\omega \rightarrow [0,\infty)$ such that $f = \sum_n g_n$.
\end{definition}

A more standard definition of lower semicomputable is that $f$ is lower semicomputable if $f(X)$ is uniformly lower semicomputable (left c.e.)\ from $X$. Our definition is easily seen to be equivalent. (This even remains true when $2^\omega$ is replaced with the unit interval or another computable Polish space.)

Recall the following definitions.
\begin{definition}
A function $f\colon 2^\omega \rightarrow \mathbb{R}$ is \df{$L^1$-computable} if there is a uniformly computable sequence of bounded computable functions $\langle g_n \rangle$ such that  $\| f-g_n \|_{L^1} \leq 2^{-n}$. (Since $2^\omega$ is compact, all computable functions are bounded.)

The \df{distribution} of a function $f\colon 2^\omega \rightarrow \mathbb{R}$ is the probability measure $\nu$ on $\mathbb{R}$ defined by  $\nu(A) = \mu(\{X\in 2^\omega \altmid f(X) \in A\})$ for all Borel sets $A\subseteq\mathbb{R}$.  This is also known as the \df{pushforward  of the fair-coin measure along $f$}.

A distribution $\nu$ is \df{computable} if $\nu(U)$ is lower semicomputable (left c.e.)\ uniformly from any code for an open set $U\subseteq\mathbb{R}$.\footnote{%
This definition is equivalent to $\nu$ being a computable point in the L{\'e}vy-Prokhorov metric \citelow{Hoyrup:2009bh}. It is also equivalent to the map $f\mapsto\int\! f\, d\nu$ being a computable operator on bounded continuous functions $f$ \citelow{Schroder:2007kx}.%
}  
(A code for an open set $U\subseteq\mathbb{R}$ is the same as in Definition~\ref{def:open}, except that the basic open sets are open intervals with rational endpoints.)
\end{definition}

We could not find a direct proof of this next fact, so we give one here.

\begin{proposition}\label{prop:dist-comp}
Let $f\colon 2^\omega \rightarrow \mathbb{R}$ be an $L^1$-computable function with distribution $\nu$.  Then $\nu$ is a computable distribution.
\end{proposition}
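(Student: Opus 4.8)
The plan is to reduce the computability of $\nu$ to the computability of integrals $\int \phi\,d\nu$ for a sufficiently rich family of Lipschitz test functions, and then to exploit the pushforward identity $\int_{\mathbb{R}} \phi\,d\nu = \int_{2^\omega} \phi\circ f\,d\mu$ together with the $L^1$-approximations to $f$. First I would note that, by the definition of a computable distribution, it suffices to lower semicompute $\nu(U)$ uniformly from a code $\langle (a_k,b_k)\rangle_{k}$ for an open set $U=\bigcup_k (a_k,b_k)\subseteq\mathbb{R}$.

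The key computational input is the following: for any bounded Lipschitz function $\phi\colon\mathbb{R}\to\mathbb{R}$ presented with a rational bound and rational Lipschitz constant $L$, the real $\int_{\mathbb{R}}\phi\,d\nu$ is computable uniformly in these data. Indeed, by the pushforward identity this integral equals $\int_{2^\omega}\phi\circ f\,d\mu$. Letting $\langle g_n\rangle$ be the $L^1$-approximations to $f$, each $\phi\circ g_n$ is a composition of computable functions, hence a bounded computable function on $2^\omega$, so $\int \phi\circ g_n\,d\mu$ is computable uniformly in $n$. Moreover $\|\phi\circ f - \phi\circ g_n\|_{L^1}\le L\,\|f-g_n\|_{L^1}\le L\,2^{-n}$, which gives $\bigl|\int\phi\circ f\,d\mu - \int\phi\circ g_n\,d\mu\bigr|\le L\,2^{-n}$. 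Thus $\int\phi\,d\nu$ is the limit of a uniformly computable sequence with a computable rate of convergence, hence computable. This is the one step where $L^1$-computability (rather than mere measurability) of $f$ is essential: the Lipschitz bound is exactly what transfers the $L^1$-convergence $g_n\to f$ into convergence of the integrals.

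Next I would approximate $\mathbf{1}_U$ from below by such test functions. For each interval $(a,b)$ and each $m\in\mathbb{N}$ set
\[
 t_{(a,b),m}(x)=\max\bigl(0,\ \min(1,\ m(x-a),\ m(b-x))\bigr),
\]
an $m$-Lipschitz trapezoid supported on $[a,b]$ with $t_{(a,b),m}\uparrow \mathbf{1}_{(a,b)}$ pointwise as $m\to\infty$. Then $h_m:=\max_{k\le m} t_{(a_k,b_k),m}$ is a bounded Lipschitz function, uniformly computable from $m$ and the code, and one checks that $h_m$ is nondecreasing in $m$ with $h_m\uparrow\mathbf{1}_U$ pointwise. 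By monotone convergence $\nu(U)=\sup_m\int h_m\,d\nu$, and since each $\int h_m\,d\nu$ is computable uniformly in $m$ by the previous paragraph, $\nu(U)$ is lower semicomputable uniformly in the code, as required.

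The main obstacle (and the reason a direct approach fails) is that $f$ is only $L^1$-computable, not pointwise computable, so $f^{-1}(U)$ need not be effectively open and $\mu(f^{-1}(U))$ cannot be lower semicomputed directly; working with discontinuous indicator functions is exactly what one cannot do. The device of integrating Lipschitz functions and controlling the error through the Lipschitz constant is what circumvents this. The remaining verifications—the pushforward identity, computability of integrals of computable functions over $(2^\omega,\mu)$, the monotonicity and pointwise convergence $h_m\uparrow\mathbf 1_U$, and the bookkeeping of uniformity—are routine.
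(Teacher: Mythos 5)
Your proof is correct, but it takes a genuinely different route from the paper's. The paper reduces computability of $\nu$ to lower semicomputing $\nu((a-r,a+r)) = \mu(\{X \in 2^\omega : |f(X)-a| < r\})$ for rational interval data and attacks this directly with Chebyshev's inequality: approximating $f$ by a computable $g$ with $\|f-g\|_{L^1} < \varepsilon\cdot\delta$, it sandwiches the target measure via $\mu(\{|f-a|<r\}) \geq \mu(\{|g-a|<r-\varepsilon\}) - \delta \geq \mu(\{|f-a|<r-2\varepsilon\}) - 2\delta$ and expresses $\nu((a-r,a+r))$ as a supremum over $\varepsilon,\delta$ of lower semicomputable quantities. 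You instead establish full computability of $\int \phi\,d\nu = \int \phi\circ f\,d\mu$ for computable bounded Lipschitz $\phi$, with the Lipschitz constant converting the $L^1$-error in $f$ into an error bound on the integral, and then lower-approximate $\mathbf{1}_U$ by the trapezoids $h_m \uparrow \mathbf{1}_U$ and invoke monotone convergence. In effect you verify by hand one direction of the equivalence the paper only cites in a footnote (Schr{\"o}der's characterization: $\nu$ is computable iff $\phi \mapsto \int\phi\,d\nu$ is a computable operator on bounded continuous functions). Your route avoids Chebyshev entirely and also avoids the paper's disjointification of overlapping intervals, since the pointwise maximum over trapezoids handles overlaps automatically; it yields the stronger byproduct that integrals of computable Lipschitz functions against $\nu$ are fully computable, not merely semicomputable. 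The cost is reliance on the (standard, uniform) computability of $\int g\,d\mu$ for total computable $g\colon 2^\omega \to \mathbb{R}$, a fact the paper uses elsewhere anyway. One cosmetic point: a rational bound and a Lipschitz constant do not by themselves specify $\phi$ --- you also need a code for $\phi$ as a computable function --- but since your $h_m$ are uniformly computable from $m$ and the code for $U$, the argument stands as written.
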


\begin{proof}It is enough to prove that
\[\nu((a-r,a+r))=\mu (\{X\in 2^\omega \altmid |f(X) - a| <  r \})\]
is lower semicomputable from $a,r$.  (An open set $U\subseteq\mathbb{R}$ is encoded as a union of countably many rational intervals. To lower semicompute $\nu(U)$ it is enough to lower semicompute the measure of each finite subunion of intervals.  A finite union of intervals can be made a disjoint union by joining overlapping intervals.)

If $f$ is computable, then we are done, since $\{X\in 2^\omega \altmid |f(X) - a| <  r \}$ is $\Sigma^0_1$ relative to $a$ and $r$.

Otherwise, we know
\begin{align}\label{eq:sup}
\begin{split}
&\mu (\{X\in 2^\omega \altmid |f(X) - a| <  r \}) \\
&\qquad \qquad= \sup_{\varepsilon>0} \mu(\{X\in 2^\omega \altmid |f(X) - a| <  r - \varepsilon \}).
\end{split}
\end{align}
For any $\varepsilon>0$ and $\delta>0$ we can effectively approximate $f$ by some computable function $g$ such that $\Vert f-g \Vert_{L^1} < \varepsilon \cdot \delta$.  Then by Chebeshev's inequality,
\[\mu (\{X\in 2^\omega \altmid |f(X) - g(X)| \geq  \varepsilon\})\leq (\varepsilon\cdot\delta) / \varepsilon = \delta.\]
So outside a set of measure at most $\delta$, we have $|f(X) - g(X)|<\varepsilon$, and therefore
\[|f(X) - a|  <  r - 2\varepsilon \quad \Rightarrow \quad  |g(X) - a|  <  r - \varepsilon \quad \Rightarrow \quad  |f(X) - a|  <  r.\]
Expressing this with measures gives us
\begin{align*}
\mu (\{X\in 2^\omega \altmid |f(X) - a| <  r \}) 
&\geq \mu(\{X\in 2^\omega \altmid |g(X) - a|  <  r - \varepsilon \}) -\delta\\
&\geq \mu(\{X\in 2^\omega \altmid |f(X) - a|  <  r - 2\varepsilon \}) - 2\delta.
\end{align*}
Combining this with (\ref{eq:sup}) we get
\begin{align*}
\begin{split}
&\mu (\{X\in 2^\omega \altmid |f(X) - a| <  r \}) \\
&\qquad \qquad = \sup_{\varepsilon>0, \delta>0} \mu(\{X\in 2^\omega \altmid |g(X) - a|  <  r - \varepsilon \}) - \delta
\end{split}
\end{align*}
where $g$ depends on $\varepsilon$ and $\delta$.  Finally, recall that $\mu (\{X\in 2^\omega \altmid |g(X) - a|  <  r - \varepsilon\})$ is lower semicomputable from $a,r$.  Using this we can approximate $\mu (\{X\in 2^\omega \altmid |f(X) - a| <  r \})$ from below.
\end{proof}

The following lemma will be the key to this paper.

\begin{lemma}[Key lemma]\label{lem:key-lemma}
Let $t$ be a nonnegative lower semicomputable function with a computable integral $\int \! t \, d\mu$.  There is a uniformly computable sequence $\langle h_n\rangle$ of total computable functions $h_n\colon 2^\omega \rightarrow [0,\infty)$ such that $h_n \leq t$ everywhere and if $A$ is Schnorr random, there is some $n$ such that $h_n(A)=t(A)$.
\end{lemma}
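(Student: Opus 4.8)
The plan is to reduce the statement to an effective form of Lusin's theorem: for each $n$ I will build a single total computable function $h_n \le t$ that agrees with $t$ everywhere off an effectively open set $W_n$ of small measure, and then show that the ``bad'' set $B$ of points captured by none of the $h_n$ is contained in $\bigcap_n W_n$ and is covered by a Schnorr test. Since a Schnorr random $A$ avoids every Schnorr test, this forces $A\notin B$, i.e.\ $h_n(A)=t(A)$ for some $n$.

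First I would unwind the hypotheses. Writing $t=\sum_k g_k$ with each $g_k\ge 0$ total computable, the partial sums $s_m=\sum_{k<m}g_k$ increase pointwise to $t$. Since both $\int t\,d\mu$ and $\int s_m\,d\mu$ are computable, $\|t-s_m\|_{L^1}=\int(t-s_m)\,d\mu$ is computable and tends to $0$; in particular $t$ is $L^1$-computable, so by Proposition~\ref{prop:dist-comp} its distribution is computable. The same remark applies to every nonnegative lower semicomputable function with computable integral that I construct below, which is what will let me control measures.

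The construction of $h_n$ is the heart of the argument. Fix $n$, choose a computable subsequence $m_0<m_1<\cdots$ (depending on $n$) along which $\int(t-s_{m_j})\,d\mu$ decreases quickly, and put $d_j=s_{m_{j+1}}-s_{m_j}\ge 0$, so that $t=s_{m_0}+\sum_j d_j$. With rational thresholds $\rho_j\approx 2^{-j}$ I define
\[ h_n \;=\; s_{m_0}+\sum_j \min(d_j,\rho_j). \]
Because $0\le\min(d_j,\rho_j)\le\rho_j$ and $\sum_j\rho_j<\infty$ with a computable bound, the partial sums converge uniformly at a computable rate, so $h_n$ is total computable, uniformly in $n$, and evidently $0\le h_n\le t$. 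Moreover $h_n(X)=t(X)$ precisely when no truncation occurs, that is, off the effectively open set $W_n=\bigcup_j\{d_j>\rho_j\}$.

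The main obstacle is the last step: turning $\langle W_n\rangle$ into a genuine Schnorr test, with \emph{computable} rather than merely lower semicomputable measures. Rather than compute $\mu(W_n)$ directly, I would route the divergence through a counting function. Using Markov's inequality and choosing the $m_j$ so that $\int d_j\,d\mu$ is tiny, I arrange $\mu\{d_j>\rho_j\}\le 2^{-n-j}$, and I take each $\rho_j$ to be a continuity point of the computable distribution of $d_j$, so that $\mu\{d_j>\rho_j\}$ is itself computable. Then $P_n=\sum_j \mathbf 1_{\{d_j>\rho_j\}}$ is nonnegative lower semicomputable with computable integral $\int P_n\,d\mu=\sum_j\mu\{d_j>\rho_j\}\le 2^{-n}$, and $G=\sum_n P_n$ is nonnegative lower semicomputable with computable integral $\int G\,d\mu\le 1$ satisfying $G(X)=\infty$ for every $X\in B=\bigcap_n W_n$. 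Finally, $G$ is $L^1$-computable, so Proposition~\ref{prop:dist-comp} gives it a computable distribution; picking computable continuity levels $\lambda_k\to\infty$ with $\mu\{G>\lambda_k\}\le 2^{-k}$ makes $\langle\{G>\lambda_k\}\rangle$ a Schnorr test with $\bigcap_k\{G>\lambda_k\}=\{G=\infty\}\supseteq B$. I expect the genuinely delicate work to be this effectivity bookkeeping—verifying the uniform modulus for $h_n$ and, above all, securing computable measures by passing to $P_n$ and $G$ instead of trying to compute $\mu(W_n)$ or to force the gaps $t-h_n$ to sum to infinity on $B$.
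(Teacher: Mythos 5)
Your proof is correct, and while it runs on the same engine as the paper's---approximate $t$ from below by computable partial sums, truncate the overshoot at thresholds chosen to be continuity points of a computable distribution, and bound the failure sets via Chebyshev/Markov---it differs from the paper's proof in two genuine ways. First, the decomposition: the paper truncates the \emph{gaps} $t-f_n$, finding one computable sequence of thresholds $c_n\in(2^{-n},2^{-(n-1)})$ with $\mu\{t-f_n>c_n\}$ computable and setting $h_m=\sup_{n>m}\min\{f_n,\,f_{n-1}+c_{n-1},\ldots,f_m+c_m\}$; since $t-f_n$ is only $L^1$-computable, locating those thresholds needs the full strength of Proposition~\ref{prop:dist-comp}. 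You instead truncate the \emph{increments} $d_j=s_{m_{j+1}}-s_{m_j}$, which are total computable, so at that stage you only need the easy (computable-function) case of the proposition---a real simplification---though the continuity points must still be selected \emph{uniformly} in $n,j$ by the effective Baire category argument the paper spells out in its Claim, and they come out as computable reals rather than the rationals you first wrote (harmless: $\min(d_j,\rho_j)$ stays computable and $\{d_j>\rho_j\}$ stays $\Sigma^0_1$ uniformly from a Cauchy code for $\rho_j$). Second, the test assembly: the paper takes $U_m=\bigcup_{n>m}\{t-f_n>c_n\}$ directly as the Schnorr test, which implicitly uses that a finite union of $\Sigma^0_1$ sets with computable measures has computable measure (by inclusion--exclusion), whereas you route the divergence through $G=\sum_n\sum_j \mathbf{1}_{\{d_j>\rho_j\}}$ and take level sets $\{G>\lambda_k\}$ at continuity points of the distribution of $G$; this costs a second, now genuinely $L^1$-computable, appeal to Proposition~\ref{prop:dist-comp}, but buys exactly what you aimed for---never computing the measure of an infinite union. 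Note you could also shortcut your last step: since $\mu\{d_j>\rho_j\}\le 2^{-n-j}$ is uniformly computable, $\mu(W_n)$ is itself computable by the finite-union fact plus the tail bound, so $\langle W_n\rangle$ (suitably reindexed) is already a Schnorr test covering your bad set $B$, making the detour through $G$ optional rather than necessary.
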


\begin{proof}
Let $\langle g_k\rangle$ be a code for $t$, namely a sequence of total nonnegative computable functions such that $t = \sum_k g_k$.  Find a sequence $\langle f_n\rangle$ of partial sums $f_n = \sum_{k<k_n} g_k$ (where $\langle k_{n} \rangle$ is increasing) such that $\int \! (t - f_n) \, d\mu < 2^{-2n}$.  (This can be done since $\int \! g_n \, d\mu$ is uniformly computable from $n$.)  By Chebeshev's inequality, for any $c>0$,
\[\label{eq:Chebeshev} \mu (\{X\in 2^\omega \altmid t(X) - f_n(X) > c \}) \leq 2^{-2n}/c. \]
Moreover, we have this claim.

\begin{claim} There is a computable sequence $\langle c_n\rangle$ such that $2^{-n}<c_n<2^{-(n-1)}$ for each $n$ and $\mu (\{X\in 2^\omega \altmid t(X) - f_n(X) > c_n \})$ is uniformly computable from $n$.  
\end{claim}

\begin{proof}[Proof of claim.]
First, note that $t$ is $L^1$-computable. (Use the sequence $\langle f_n \rangle$ from earlier in the proof.)

Now, fix $n$.  Our goal is to find $c_n$.  Since, $t$ is $L^1$-computable, so is $t - f_n$.  Let $\nu$ to be the distribution of $t - f_n$.  By Proposition~\ref{prop:dist-comp}, $\nu$ is a computable distribution.  

Hence for any real $c$,
\[\mu (\{X\in 2^\omega \altmid t(X) - f_n(X) >  c \})= \nu ((c,\infty))\]
is lower semicomputable uniformly from $c$ and
\[\mu (\{X\in 2^\omega \altmid t(X) - f_n(X) \geq  c \})= \nu ([c,\infty))= 1 - \nu ((-\infty,c))\]
is upper semicomputable uniformly from $c$.

It is enough to find some $c$ in the desired interval such that 
\[\mu (\{X\in 2^\omega \altmid t(X) - f_n(X) =  c \}) = \nu(\{c\}) =0.\]
Indeed, the set of all $c$ such that $\nu(\{c\})=0$ is a computable intersection of dense $\Sigma^0_1$ sets.  (To see this, note that $\nu(\{c\})=1-\nu((-\infty,c)\cup(c,\infty))$ is upper semicomputable uniformly from $c$.  So $\{c\in \mathbb{R} \altmid \nu(\{c\}) < 2^{-n}\}$ is a $\Sigma^0_1$ set.  This set is also dense since there are at most countable many $c$ such that $\nu(\{c\})>0$.) Hence by the effective proof of the Baire category theorem (a basic diagonalization argument, see for example \citelow{Brattka2001}), one can effectively find $c_n$ in the desired interval such that $\mu (\{X\in 2^\omega \altmid t(X) - f_n(X) > c_n \})$ is computable.  This proves the claim.
\end{proof}

To define $h_m$, first set 
\[ h^m_n = \min\{f_n,f_{n-1}+c_{n-1}, \ldots , f_m + c_m\} \quad(\text{for } n > m) .\]
Define $h_m = \sup_{n > m} h^m_n$. For each $X$ and $n > m$ we have $|h_m(X) - h^m_n(X)| \leq c_n$.  So $h_m(X)$ is uniformly computable from $X$ and $m$.  Also, $h_m \leq t$ everywhere, and $t(X) > h_m(X)$ if and only if $t(X) > f_n(X) + c_n$ for some $n>m$.  Then
\[
\{X\in 2^\omega \altmid t(X) > h_m(X) \} = 
 \bigcup_{n>m} \underbrace{\{X\in 2^\omega \altmid t(X) - f_n (X) > c_n\}}_{=:V^m_n} =: U_m.
\]
Notice $V_n$ is $\Sigma^0_1$ uniformly in $n$. By the claim and by inequality~(\ref{eq:Chebeshev}), $\mu(V_n)$ is uniformly computable from $n$ and at most $2^{-n}$.  Therefore, $\mu(U_m)\leq \sum_{n>m} 2^{-n}=2^{-m}$ and $\mu(U_m)$ is uniformly computable from $m$.  Hence $\langle U_m\rangle$ is a Schnorr test.

Finally, for any Schnorr random $A$, there is some $m$ such the $A \notin U_m$. Hence $h_m(A) = t(A)$.  This completes the proof of the lemma.
\end{proof}

\begin{remark}\label{rem:proof-is-uniform}
Notice, this proof gives a uniform algorithm for converting the codes for $t$ and $\int \! t \, d\mu$ into a code for a Schnorr test $\langle U_m\rangle$ such that $t(X)$ is finite when $X \notin \bigcap_m U_m$.  However, the uniformity is not (and cannot be) independent of the codes.  Indeed even a different code for $\int \! t \, d\mu$ can change the sequences $\langle g_k\rangle$ and $\langle c_n\rangle$ in the proof leading to a different Schnorr test $\langle U_m\rangle$.
\end{remark}

As a corollary, we get another proof of Miyabe's characterization of Schnorr randomness via Schnorr integral tests. 

\begin{definition}\ 
\begin{enumerate}
  \item An \df{integral test} is a lower semicomputable function $t\colon 2^\omega \rightarrow [0,\infty]$ such that $ \int \! t \, d\mu <\infty$.
  \item (Miyabe \citelow{Miyabe:cr}) A \df{Schnorr integral test} is an integral test $t$ such that $ \int \! t \, d\mu$ is computable. 
\end{enumerate}
\end{definition}

\begin{proposition}[See for example \citelow{Downey:2010ve}]\ 
$X$ is Martin-L{\"o}f random if and only if there is no integral test $t$ such that $t(X)=\infty$.
\end{proposition}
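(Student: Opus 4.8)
The plan is to prove both directions by contraposition, passing between integral tests and Martin-L\"of tests via Chebeshev's inequality (the same inequality used in the key lemma). Recall that, since $t$ is lower semicomputable, the superlevel set $\{X \in 2^\omega \altmid t(X) > q\}$ is $\Sigma^0_1$ uniformly in any computable real threshold $q$.

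For the direction ``if some integral test captures $X$, then $X$ is not Martin-L\"of random,'' suppose $t$ is an integral test with $t(X) = \infty$, and write $c = \int \! t \, d\mu < \infty$. Since $c$ is finite, fix an integer $N \geq c$ and set
\[
U_n = \{X \in 2^\omega \altmid t(X) > N\cdot 2^n\}.
\]
The thresholds $N \cdot 2^n$ are computable, so $\langle U_n\rangle$ is uniformly $\Sigma^0_1$, and Chebeshev's inequality gives $\mu(U_n) \leq c/(N\cdot 2^n) \leq 2^{-n}$. Thus $\langle U_n\rangle$ is a Martin-L\"of test, and every point $Y$ with $t(Y) = \infty$ lies in $\bigcap_n U_n$; in particular $X$ is covered by this test.

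For the converse, ``if $X$ is not Martin-L\"of random, then some integral test captures $X$,'' fix a Martin-L\"of test $\langle U_n\rangle$ with $X \in \bigcap_n U_n$ and set
\[
t = \sum_n \mathbf{1}_{U_n}.
\]
Each indicator $\mathbf{1}_{U_n}$ is lower semicomputable in the sense of the paper: writing $U_n = \bigcup_s C_s$ for basic open $C_s$, the function $\mathbf{1}_{U_n}$ is the sum of the nonnegative computable functions $\mathbf{1}_{\bigcup_{i \leq s} C_i} - \mathbf{1}_{\bigcup_{i < s} C_i}$, each of which is the indicator of a clopen set. Hence $t$ is lower semicomputable, $\int \! t \, d\mu = \sum_n \mu(U_n) \leq \sum_n 2^{-n} < \infty$, so $t$ is an integral test, and $t(X) = \sum_n 1 = \infty$ since $X \in U_n$ for every $n$.

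The one point requiring care is the first direction: the integral $c = \int \! t \, d\mu$ need only be finite, not computable, so we cannot directly use $c \cdot 2^n$ as a threshold and still know that $U_n$ is $\Sigma^0_1$. Replacing $c$ by an integer upper bound $N$ fixes this, at the cost of obtaining the test only non-uniformly---which is harmless here, since the characterization asserts mere existence of a test rather than a uniform procedure. The second direction is then routine once one checks that the indicators are lower semicomputable in the specific summation-of-computable-functions sense used in this paper.
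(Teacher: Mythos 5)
Your proof is correct and is essentially the standard argument that the paper cites rather than proves: in one direction, superlevel sets $\{t > N\cdot 2^n\}$ at computable thresholds with Markov's (Chebeshev's) inequality, and in the other, $t = \sum_n \mathbf{1}_{U_n}$, which is exactly the construction the paper itself reuses in the proof of Corollary~2.10. Your observation that $\int\! t\, d\mu$ is only lower semicomputable, so one must replace it by an integer bound $N$ to keep the superlevel sets $\Sigma^0_1$, is the right fix for the one genuinely delicate point.
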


\begin{corollary}\label{cor:int-test-comp}
If $t$ is a Schnorr integral test and $A$ is Schnorr random, then $t(A)$ is finite and computable from $A$.
\end{corollary}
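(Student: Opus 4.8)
The plan is to apply the key lemma (Lemma~\ref{lem:key-lemma}) directly and then upgrade the conclusion from ``$t(A)$ is finite'' to ``$t(A)$ is computable from $A$''. A Schnorr integral test $t$ is by definition a nonnegative lower semicomputable function with computable integral $\int\! t\,d\mu$, so the hypotheses of the key lemma are met. This yields a uniformly computable sequence $\langle h_m\rangle$ of total functions with $h_m\le t$ everywhere, together with the Schnorr test $\langle U_m\rangle$ built in the proof of the lemma, where $U_m=\{X: t(X)>h_m(X)\}$, $\mu(U_m)$ is uniformly computable, and $t(X)=h_m(X)$ precisely when $X\notin U_m$. Finiteness is then immediate: a Schnorr random $A$ escapes the test, so $A\notin U_m$ for some $m$, whence $t(A)=h_m(A)$, a finite value since each $h_m$ maps into $[0,\infty)$.

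For computability from $A$ I would argue in two halves. The lower half is free: $t$ is lower semicomputable, so $t(A)$ is left-c.e.\ uniformly from $A$; concretely $t(A)=\sup_m h_m(A)$ with each $h_m(A)$ computable from $A$. The real content is producing certified \emph{upper} bounds. These come from the partial sums: writing $V_n=\{X: t(X)-f_n(X)>c_n\}$ (so that $U_m=\bigcup_{n>m}V_n$), we have $A\notin V_n$ iff $t(A)\le f_n(A)+c_n$, and since $f_n(A)\le t(A)$ always, such an $n$ pins $t(A)$ to within $c_n<2^{-(n-1)}$. Because $A$ escapes, $A\notin V_n$ for all large $n$, so certified bounds of every precision exist; the trouble is that ``$A\notin V_n$'' is only co-c.e.\ in $A$. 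Here I would exploit that the test is Schnorr, not merely Martin-L\"of: since $\mu(V_n)$ is computable, I can approximate $V_n$ from inside by a clopen set $\tilde V_n\subseteq V_n$ with $\mu(V_n\setminus\tilde V_n)<2^{-n}$, and then $\bigcup_{n>m}(V_n\setminus\tilde V_n)$ is a second Schnorr test. Escaping this second test gives a level past which $A\in V_n\iff A\in\tilde V_n$, a decidable clopen condition, so at cofinitely many indices $A$ can decide its own membership in $V_n$ and thereby certify the bound $f_n(A)+c_n\ge t(A)$.

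I expect the main obstacle to be exactly the effectiveness of this last step. ``Escaping'' a test is an eventual, non-effective fact, so knowing that $A\notin V_n$ for all large $n$ does not by itself hand us a modulus telling us which $n$ are safe; and the clopen approximation may misreport membership at the finitely many small indices below the escape level. The delicate point is to discard those bad indices by a genuinely effective search from $A$, rather than one that only converges in the limit (which would give no more than $t(A)\le_T A'$). The idea is to run the semidecision ``$A\in V_n$'' in parallel with the clopen test and to use the computability of the measures $\mu(U_m)$ and $\mu(V_n)$ to bound the search, so that a safe large index is output with certainty. It is precisely the computability of these measures---the defining feature of Schnorr randomness---that makes the escape level accessible to $A$, and one should expect the analogous statement to fail for ordinary Martin-L\"of or computable randomness, consistent with the negative results discussed elsewhere in the paper.
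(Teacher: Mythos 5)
You stop one step short of the paper's proof, which is a single line: by Lemma~\ref{lem:key-lemma} there is some fixed $m$ with $t(A)=h_m(A)$, and since $h_m$ is a \emph{total computable} function (the proof of the lemma even notes that $h_m(X)$ is computable uniformly from $X$ and $m$), the value $h_m(A)$ is computable from $A$ once the single natural number $m$ is hard-coded into the oracle algorithm. That is all: ``computable from $A$'' is a non-uniform notion, so no effective procedure for \emph{finding} the escape level is required, and $h_m(A)$ is not merely a lower bound but the exact value of $t(A)$. Your second and third paragraphs therefore attack a problem that does not exist, and the attack itself contains the genuine gap: you correctly observe that escaping a test comes with no modulus, and then defer the crucial step to an unspecified ``effective search'' that uses the computable measures ``so that a safe large index is output with certainty.'' As stated this is not a proof, and no such uniform search exists in general (this is essentially the point of Remark~\ref{rem:proof-is-uniform}: uniformity lives at the level of codes for tests, not at the level of which $U_m$ a given random point escapes). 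The repair is not a cleverer search but finite advice: hard-code the levels $m_0$ with $A\notin U_{m_0}$ and $m'$ at which $A$ escapes your second test, after which your clopen certificates work for all $n>\max(m_0,m')$.

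With that repair your detour is in fact sound, but redundant. The inner approximation $\tilde V_n\subseteq V_n$ with $\mu(V_n\setminus \tilde V_n)<2^{-n}$ is available because $\mu(V_n)$ is computable, and $\bigcup_{n>m}(V_n\setminus\tilde V_n)$ is indeed a Schnorr test: each $V_n\setminus\tilde V_n$ is $\Sigma^0_1$ with computable measure, finite unions of uniformly $\Sigma^0_1$ sets with computable measures have computable measures (by inclusion--exclusion, since measures of effectively open sets are left-c.e.), and the tails are computably bounded by $2^{-N}$. Past the two escape levels, $A\notin\tilde V_n$ certifies $t(A)\le f_n(A)+c_n$ with $c_n<2^{-(n-1)}$, which together with the left-c.e.\ lower approximations pins down $t(A)$. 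But notice that this construction simply re-derives, for the single point $A$, exactly what the functions $h_m$ of the key lemma already package: the lemma's $h_m=\sup_{n>m}\min\{f_n,f_{n-1}+c_{n-1},\dots,f_m+c_m\}$ is built from the same certified upper bounds $f_n+c_n$, assembled once and for all into total computable functions. So the correct takeaway is that the key lemma was engineered precisely so that the corollary needs no further work; your instinct that Schnorr-style computable measures are what make the upper bounds accessible is right, but that content was already spent inside the lemma.
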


\begin{proof}
By Lemma~\ref{lem:key-lemma}, there is some total computable $h_n$ such the $t(A)=h_n(A)$.
\end{proof}

\begin{corollary} [Miyabe \citelow{Miyabe:cr}]\label{cor:int-test}
$X$ is Schnorr random if and only if there is no Schnorr integral test $t$ such that $t(X)=\infty$.
\end{corollary}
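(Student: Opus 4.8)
The plan is to prove the two directions separately. The forward direction is essentially a restatement of Corollary~\ref{cor:int-test-comp}, and the backward direction is the standard passage from a test covering $X$ to an integral test that is infinite at $X$, adapted to keep the integral computable.

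For the forward direction, suppose $X$ is Schnorr random and let $t$ be any Schnorr integral test. By Corollary~\ref{cor:int-test-comp}, $t(X)$ is finite; hence no Schnorr integral test takes the value $\infty$ at $X$, which is exactly the claim.

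For the backward direction I argue the contrapositive. If $X$ is not Schnorr random, fix a Schnorr test $\langle U_n\rangle$ covering $X$, so that $X\in\bigcap_n U_n$, $\mu(U_n)\le 2^{-n}$, and $\mu(U_n)$ is uniformly computable in $n$. I set $t=\sum_n \mathbf{1}_{U_n}$, where $\mathbf{1}_{U_n}$ is the indicator function of $U_n$. Since $X\in U_n$ for every $n$, this gives $t(X)=\infty$, as desired. To see that $t$ is a Schnorr integral test, I check two things. First, $t$ is lower semicomputable in the sense of this paper: writing $U_n=\bigcup_s C^n_s$ from its code and passing to the pairwise disjoint clopen sets $D^n_s=C^n_s\setminus\bigcup_{j<s}C^n_j$ (whose indicator functions are total and computable, since each depends on only finitely many bits), we have $t=\sum_{n,s}\mathbf{1}_{D^n_s}$, which rearranges along a computable bijection into a single uniformly computable sequence of total computable functions. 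Second, $\int t\,d\mu=\sum_n\mu(U_n)$ is computable, since the partial sums are uniformly computable and the tail is bounded by $\sum_{n>N}2^{-n}=2^{-N}$; in particular $\int t\,d\mu<\infty$.

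I expect essentially no obstacle here: all the difficulty has already been absorbed into Lemma~\ref{lem:key-lemma} and Corollary~\ref{cor:int-test-comp}. The only point needing care is the verification that a sum of indicators of $\Sigma^0_1$ sets is lower semicomputable in the restrictive ``sum of total computable functions'' sense used in this paper, which the disjoint-clopen decomposition handles; the computability of the integral then follows immediately from the uniform computability and geometric decay of $\mu(U_n)$.
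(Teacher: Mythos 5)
Your proof is correct and follows essentially the same route as the paper: the forward direction via the key lemma (packaged as Corollary~\ref{cor:int-test-comp}) and the backward direction via $t=\sum_n \mathbf{1}_{U_n}$. The paper's own proof is just a terser version of yours; your extra verification that $\sum_n\mathbf{1}_{U_n}$ is lower semicomputable in the ``sum of total computable functions'' sense (via the disjoint clopen decomposition) and that $\sum_n\mu(U_n)$ is computable is exactly the routine checking the paper leaves implicit.
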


\begin{proof}
($\Rightarrow$) Assume $A$ is Schnorr random and $t$ is a Schnorr test.  By Lemma~\ref{lem:key-lemma}, there is some total computable $h_n$ such the $t(A)=h_n(A)<\infty$.
($\Leftarrow$) Assume $A$ is not Schnorr random by the Schnorr test $\langle U_n\rangle$.  Then let $t = \sum_n \mathbf{1}_{U_n}$ where $\mathbf{1}_{U_n}$ is the characteristic function of $U_n$.
\end{proof}

\section{Van Lambalgen's theorem for Schnorr randomness}
\label{sec:vL-for-Schnorr}

As a warm up, we use the results from the previous section to give a simple proof of the ``$\Leftarrow$" direction of van Lambalgen's theorem for Schnorr randomness.  While our proof is simple, we argue that it is not a straightforward modification of van Lambalgen's theorem.  (Franklin and Stephan \citelow{Franklin:2011fr} also recently gave a very different proof of this result.)  

\begin{theorem} \label{thm:VL-Schnorr}
If $A$ is Schnorr  random and $B$ is Schnorr random relative to $A$ then $A \oplus B$ is Schnorr random.
\end{theorem}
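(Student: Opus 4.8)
The plan is to run the standard integral-test proof of van Lambalgen's theorem, but to replace the one step that breaks for Schnorr randomness with an appeal to Corollary~\ref{cor:int-test-comp}. I would argue by contraposition: assuming $A$ is Schnorr random but $A \oplus B$ is not, I would show that $B$ fails to be Schnorr random relative to $A$. By Corollary~\ref{cor:int-test} there is a Schnorr integral test $t\colon 2^\omega \to [0,\infty]$, which I view as a test on pairs via the identification of $A\oplus B$ with $(A,B)$ (under which the fair-coin measure becomes the product measure), such that $t(A\oplus B)=\infty$.

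The next step is to integrate out the second coordinate. I would set
\[ t_A(X) = \int t(X \oplus Y)\, d\mu(Y). \]
Writing $t=\sum_k g_k$ for a code $\langle g_k\rangle$ of $t$, each section integral $X \mapsto \int g_k(X\oplus Y)\,d\mu(Y)$ is total computable, so $t_A$ is a sum of total computable functions and hence lower semicomputable. By Fubini, $\int t_A\, d\mu = \int t\, d\mu$, which is computable; hence $t_A$ is a Schnorr integral test.

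Here is the crucial step, and the one at which the naive argument fails. I would apply Corollary~\ref{cor:int-test-comp} to the Schnorr random $A$: since $t_A$ is a Schnorr integral test, $t_A(A)$ is finite \emph{and computable from $A$}. Now consider $s(Y) = t(A\oplus Y) = \sum_k g_k(A \oplus Y)$. It is lower semicomputable relative to $A$, and $\int s\, d\mu = t_A(A)$ is computable from $A$, so $s$ is a Schnorr integral test relative to $A$. Since $s(B) = t(A\oplus B) = \infty$, the $A$-relativized form of Corollary~\ref{cor:int-test} shows that $B$ is not Schnorr random relative to $A$, which completes the contrapositive.

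I expect the conceptual obstacle to be exactly the computability of $\int s\, d\mu = t_A(A)$ from $A$. For Martin-L{\"o}f randomness one only needs $t_A(A)<\infty$, which is immediate from $\int t_A\,d\mu<\infty$ and the Martin-L{\"o}f randomness of $A$; the step is genuinely trivial there. For Schnorr randomness one additionally needs this sectional integral to be \emph{computable} from $A$, and this is precisely what is not automatic: the measures of the sections $\{Y : A\oplus Y \in U_n\}$ of a Schnorr test need not be computable from $A$, so the sections of a Schnorr test need not form a Schnorr test relative to $A$. This gap is why the result is not the ``straightforward modification" claimed in the literature, and the entire content of the proof is that Corollary~\ref{cor:int-test-comp}---itself a consequence of the key lemma---supplies exactly this missing computability.
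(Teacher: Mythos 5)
Your proof is correct and takes essentially the same route as the paper's: your $t_A$ is exactly the paper's auxiliary function $u(X)=\int t^X\,d\mu$, and you invoke Corollary~\ref{cor:int-test-comp} at precisely the step the paper flags as the crux, namely to get that $\int t^A\,d\mu = u(A)$ is \emph{computable from $A$} (not merely finite) so that $t^A$ is a Schnorr integral test relative to $A$. Your closing diagnosis of why the Martin-L{\"o}f argument does not transfer is also the content of the paper's remark following the theorem.
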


\begin{proof}
Assume $A \oplus B$ is not Schnorr random and that $A$ is Schnorr random.  There is some Schnorr integral test $t$ such that  $t(A \oplus B)=\infty$.  We wish to show that $B$ is not Schnorr random relative to $A$.  For each $X$, define $t^X(Y) = t(X \oplus Y)$.  Clearly, $t^A(B)=t(A\oplus B)=\infty$.   

We will show that $t^A$ is a Schnorr integral test relative to $A$.  For each $X$, the function $t^X$ is lower semicomputable with a code uniformly computable from $X$. Define, $u(X)=\int \! t^X(Y) \, d\mu(Y)$.  Notice $u$ is lower semicomputable and by Fubini's theorem
\[ \int \! u(X) \, d\mu(X)=\iint \! t(X \oplus Y) \, d\mu(Y) \, d\mu(X) = \int \! t \, d\mu\]
which is computable.
It follows that $u$ is a Schnorr integral test.  \emph{Since $A$ is Schnorr random, by Corollary~\ref{cor:int-test-comp}, $u(A)$ is computable from $A$, and therefore so is $\int \! t^A(Y) \, d\mu(Y)=u(A)$.} Hence $t^A$ is a Schnorr integral test relative to $A$, and $B$ is not Schnorr random relative to $A$.
\end{proof}

\begin{remark}
Notice the emphasized line in the above proof.  The key difficulty in adapting the standard (integral test) proof of van Lambalgen's theorem to Schnorr randomness is that for Martin-L{\"o}f randomness one need only show that $\int \! t^A(Y) \, d\mu(Y)$ is \emph{finite} while here we must show it is \emph{computable from $A$}.  The same difficulties exist when trying to adapt a proof using Martin-L{\"o}f tests; given a Schnorr test $\langle U_n\rangle$ one must show the measure of $U^A_n := \{Y\in 2^\omega \altmid A \oplus Y \in U_n \}$ is computable from $A$.

Consider this latter case.  One may incorrectly think that $\mu(U^A_n)$ is uniformly computable from $A$, any code for $U_n$, and the measure $\mu(U_n)$.%
\footnote{%
Indeed, this was the error in \citelow{Miyabe:2011zr} when proving van Lambalgen's theorem for truth-table Schnorr randomness.%
}  This is false, as the following picture shows. The two open sets depicted (on $2^\omega \times 2^\omega$) are almost the same except one contains a small gap.  Such a small gap could significantly change the value of $\mu(U^A_n)$.  It is impossible to determine in a fixed number of steps whether such a small gap exists.

\begin{center}
\includegraphics{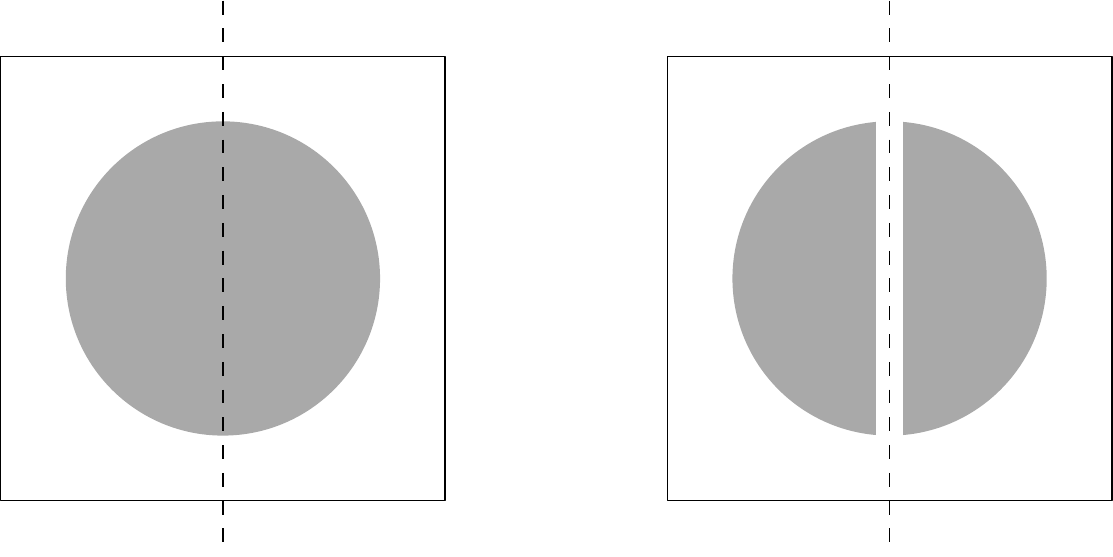}
\end{center}

However, the key lemma does show that $\mu(U^A_n)$  is computable from $A$ if $A$ is Schnorr random.  One could say such a function is ``nearly computable," i.e.\  it is computable outside an arbitrary small set.  This is an effectivization of Littlewood's ``three principles" of measure theory, particularly Lusin's theorem which says that a measurable function is ``nearly continuous".  It is also the basis behind the layerwise-computability framework of Hoyrup and Rojas \citelow{Hoyrup:2009ly} and its extensions to Schnorr randomness.  (See also \citelow{Miyabe:cr,Pathak:2012bh, Rute:dq}.)
\end{remark}

\section{Uniformly relative Schnorr randomness}
\label{sec:tt-Schnorr}

In this section we give the correction to Miyabe's proof of van Lambalgen's theorem for uniformly relative Schnorr randomness.  But first, we define uniformly relative Schnorr randomness.

Recall, that a Schnorr test can be encoded by a function $f \in \mathbb{N}^\mathbb{N}$ which encodes a listing of basic open sets for each $n$ which union to $U_n$ (as in Definition~\ref{def:open}) and also encodes a fast Cauchy sequence of rationals converging to each measure $\mu(U_n)$.  (Recall, a sequence of rationals $\langle q_n\rangle$ is \df{fast-Cauchy} if for all $n \geq m$, we have $|q_n - q_m| \leq 2^{-m}$.)
 
\begin{definition}\label{def:Uniform-Schnorr}
A \df{uniform Schnorr test} is a total computable map $\Phi\colon 2^\omega \rightarrow \mathbb{N}^\mathbb{N}$ such that each $\Phi(X)$ encodes a Schnorr test.  Also, call a collection $\langle U^X_n \rangle_{n\in\mathbb{N}, X\in 2^\omega}$ a \df{uniform Schnorr test} if it is given by a map $\Phi\colon 2^\omega \rightarrow \mathbb{N}^\mathbb{N}$ as above.
\end{definition}

\begin{definition}[Miyabe \citelow{Miyabe:2011zr}, following Franklin and Stephan \citelow{Franklin:2010ys}]
Let $A,B \in 2^\omega$.  Say $A$ is \df{Schnorr random uniformly relative to $B$} if there is no \df{uniform Schnorr test}  $\langle U^X_n \rangle_{n\in\mathbb{N}, X\in 2^\omega}$ such that $A\in \bigcap_n U^B_n$. 
\end{definition}

\begin{remark}
Uniformly relative Schnorr randomness has previously been called truth-table Schnorr randomness.  This new name better reflects the exact nature of the tests.  See Section~\ref{sec:equiv} for more discussion on the differences between the two approaches to relativizing Schnorr randomness.
\end{remark}

Also, it is possible to define uniform tests of other types similarly.

\begin{definition}
A \df{uniform Schnorr integral test} is a total computable map $\Phi\colon 2^\omega \rightarrow \mathbb{N}^\mathbb{N}$ such that each $\Phi(X)$ encodes a Schnorr integral test.  Also, call a collection $\langle t^X \rangle_{X\in 2^\omega}$ a \df{uniform Schnorr integral test} if it is given by a map $\Phi\colon 2^\omega \rightarrow \mathbb{N}^\mathbb{N}$ as above.
\end{definition}

\begin{proposition} \label{prop:unif-Sch-int-test}
Let $A,B \in 2^\omega$.  Then $A$ is Schnorr random uniformly relative to $B$ if and only if there is no uniform Schnorr integral test  $\langle t^X \rangle_{X\in 2^\omega}$ such that $t^B(A) = \infty$. 
\end{proposition}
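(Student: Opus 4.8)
The plan is to prove this biconditional as the uniform, relativized analogue of Corollary~\ref{cor:int-test}, running both implications uniformly in the oracle parameter $X$. It is cleanest to argue the contrapositive on both sides: I will show that $A$ is \emph{not} Schnorr random uniformly relative to $B$ if and only if there is a uniform Schnorr integral test $\langle t^X\rangle$ with $t^B(A)=\infty$.

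For the direction that starts from a uniform Schnorr test $\langle U^X_n\rangle$ with $A\in\bigcap_n U^B_n$, I would mimic the ``$\Leftarrow$'' half of Corollary~\ref{cor:int-test} and set $t^X=\sum_n \mathbf{1}_{U^X_n}$. For each fixed $X$ this is lower semicomputable: each $U^X_n$ is $\Sigma^0_1$ uniformly in $n$ and $X$, so $\mathbf{1}_{U^X_n}$ is a monotone limit of characteristic functions of clopen sets and hence expressible as a sum of nonnegative total computable functions, and the resulting double sum over $n$ can be re-indexed into a single sum. Moreover $\int t^X\,d\mu=\sum_n \mu(U^X_n)$, and because $\langle U^X_n\rangle$ is a uniform Schnorr test the measures $\mu(U^X_n)$ are computable uniformly in $n$ and $X$ with $\mu(U^X_n)\le 2^{-n}$; truncating the sum and bounding the tail by $2^{-N}$ yields a fast-Cauchy sequence computable uniformly in $X$. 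Thus $t^X$ is a Schnorr integral test for every $X$, the map $X\mapsto t^X$ is total computable, so $\langle t^X\rangle$ is a uniform Schnorr integral test, and $t^B(A)=\sum_n \mathbf{1}_{U^B_n}(A)=\infty$ since $A\in\bigcap_n U^B_n$.

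For the converse direction I would invoke the uniformity of the key lemma recorded in Remark~\ref{rem:proof-is-uniform}. Given a uniform Schnorr integral test $\langle t^X\rangle$ with $t^B(A)=\infty$, note that the defining map $\Phi$ supplies, uniformly in $X$, both a code for $t^X$ and a code for $\int t^X\,d\mu$. Feeding these into the uniform algorithm of Remark~\ref{rem:proof-is-uniform} produces, for each $X$, a Schnorr test $\langle U^X_m\rangle$ with the property that $t^X(Y)$ is finite whenever $Y\notin\bigcap_m U^X_m$; equivalently, $t^X(Y)=\infty$ implies $Y\in\bigcap_m U^X_m$. Since the algorithm is uniform in the codes and the codes are computed uniformly from $X$, the resulting map $X\mapsto\langle U^X_m\rangle$ is again a total computable map of the required form, i.e.\ a uniform Schnorr test. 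Specializing to $X=B$ and using $t^B(A)=\infty$ gives $A\in\bigcap_m U^B_m$, so $A$ is not Schnorr random uniformly relative to $B$.

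I expect the only genuine difficulty to be this converse direction, and specifically the need for the conversion from integral test to Schnorr test to be uniform in the oracle. Remark~\ref{rem:proof-is-uniform} is exactly what makes this go through: the construction of $\langle U_m\rangle$ in Lemma~\ref{lem:key-lemma} depends only, and computably, on the supplied codes for $t$ and $\int t\,d\mu$, so applying it pointwise to $t^X$ yields a single total computable map and hence a uniform Schnorr test. The one point to double-check is that the same codes are used throughout---as the remark warns, a different code for the integral could produce a different test---but since $\Phi$ fixes these codes uniformly in $X$, no non-uniformity is introduced.
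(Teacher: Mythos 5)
Your proposal is correct and follows essentially the same route as the paper: the paper's proof simply says to run the two directions of Corollary~\ref{cor:int-test} uniformly in the oracle, citing Remark~\ref{rem:proof-is-uniform} for the uniformity of the code-to-code conversion, which is exactly what you do (with the details of the $t^X=\sum_n \mathbf{1}_{U^X_n}$ direction and the fixed-codes caveat spelled out explicitly). No gaps; your elaboration just makes the paper's two-sentence sketch explicit.
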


\begin{proof}
Follow the proof of Corollary~\ref{cor:int-test}.  The proof is uniform in that given a code for a test, some code for the other type of test is computable uniformly from the first code (see Remark~\ref{rem:proof-is-uniform}).
\end{proof}

\begin{remark}
While the reader should not confuse our definition of uniform test with that of Levin \citelow{Levin:1976uq}, the two ideas are closely related.  The main difference is that in Levin's definition the uniform test is indexed by probability measures $\xi$ and the test $t^\xi$ was with respect to the measure $\xi$ (i.e.\ $\int\! t^\xi\, d\xi < \infty$).
\end{remark}

Now, we prove the main result.  The proof will closely follow that of Theorem~\ref{thm:VL-Schnorr}, but will use the full power of the key lemma (Lemma~\ref{lem:key-lemma}).

\begin{theorem} \label{thm:VL-tt-Schnorr}
$A \oplus B$ is  Schnorr random if and only if $A$ is Schnorr random and $B$ is Schnorr random uniformly relative to $A$.
\end{theorem}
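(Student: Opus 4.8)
The plan is to prove both directions, using Proposition~\ref{prop:unif-Sch-int-test} to pass freely between uniform Schnorr tests and uniform Schnorr integral tests. The ``$\Rightarrow$'' direction is the routine one. Assume $A \oplus B$ is Schnorr random. If $A$ were not Schnorr random, witnessed by a Schnorr test $\langle U_n\rangle$, then $W_n := \{X \oplus Y \altmid X \in U_n,\ Y \in 2^\omega\}$ is a Schnorr test ($\mu(W_n)=\mu(U_n)$ is computable and at most $2^{-n}$) covering $A \oplus B$, a contradiction; so $A$ is Schnorr random. If $B$ were not Schnorr random uniformly relative to $A$, witnessed by a uniform Schnorr test $\langle U^X_n\rangle$, then I would set $W_n := \{X \oplus Y \altmid Y \in U^X_n\}$. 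This is $\Sigma^0_1$ uniformly in $n$, it contains $A \oplus B$, and by Fubini $\mu(W_n) = \int \mu(U^X_n)\,d\mu(X) \le 2^{-n}$. Since the measures $\mu(U^X_n)$ are encoded uniformly in $X$ in a uniform test, $X \mapsto \mu(U^X_n)$ is a computable function, so its integral is a computable real and $\langle W_n\rangle$ is a Schnorr test covering $A \oplus B$ --- again a contradiction.

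For the ``$\Leftarrow$'' direction I argue the contrapositive: assume $A$ is Schnorr random but $A \oplus B$ is not, and produce a uniform Schnorr integral test witnessing, via Proposition~\ref{prop:unif-Sch-int-test}, that $B$ is not Schnorr random uniformly relative to $A$. As in Theorem~\ref{thm:VL-Schnorr}, by Corollary~\ref{cor:int-test} there is a Schnorr integral test $t$ with $t(A\oplus B) = \infty$; set $t^X(Y) = t(X \oplus Y)$ and $u(X) = \int t^X(Y)\,d\mu(Y)$, so that $t^A(B) = \infty$ and, by Fubini, $u$ is a Schnorr integral test. The obstruction --- and the whole point of the paper --- is that $\langle t^X\rangle$ need not be a \emph{uniform} Schnorr integral test, because $u(X) = \int t^X\,d\mu$ need not be computable uniformly in $X$. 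Here I apply the key lemma (Lemma~\ref{lem:key-lemma}) to $u$ to obtain total computable $h_n \le u$ with $h_n(A) = u(A)$ for some $n$, since $A$ is Schnorr random. The plan is then to replace the non-computable integral $u(X)$ by the computable value $h_n(X)$.

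I expect the main obstacle to be turning $t^X$ into a genuine uniform Schnorr integral test whose integral equals the computable function $h_n(X)$, while preserving lower semicomputability and not destroying the value at $(A,B)$. I would do this by \textbf{capping the integral greedily}. Writing $t^X = \sum_k g_k^X$ with $g_k^X$ total computable and $a_k(X) = \int g_k^X\,d\mu$ computable, I scale each term by a factor $\beta_k(X) = \mathrm{clamp}\bigl((h_n(X) - \sum_{j<k} a_j(X))/a_k(X),\,0,\,1\bigr)$ and set $s^{X,n} = \sum_k \beta_k(X)\,g_k^X$. Then $s^{X,n} \le t^X$ is lower semicomputable, its integral $\int s^{X,n}\,d\mu = h_n(X)$ is computable uniformly in $X$, and crucially, when $h_n(X) = u(X)$ no capping occurs and $s^{X,n} = t^X$. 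To keep the denominators positive and the $\beta_k(X)$ honestly computable, I would first add a small computable constant to each $g_k^X$ and apply the key lemma to $u$ plus the corresponding constant; this is harmless bookkeeping. This greedy capping is exactly where the ``nearly computable'' phenomenon discussed in the remark after Theorem~\ref{thm:VL-Schnorr} is converted into an honest uniform test.

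Finally, since the key lemma only guarantees \emph{some} $n$ with $h_n(A) = u(A)$, I combine the family into a single test by a weighted sum such as
\[
s^X(Y) = \sum_n \frac{2^{-n}}{1 + h_n(X)}\, s^{X,n}(Y),
\]
whose integral $\sum_n 2^{-n} h_n(X)/(1 + h_n(X)) \le 2$ is computable uniformly in $X$, so that $\langle s^X\rangle$ is a uniform Schnorr integral test. For the good $n$ we have $s^{A,n}(B) = t^A(B) = \infty$, hence $s^A(B) = \infty$, and Proposition~\ref{prop:unif-Sch-int-test} then shows $B$ is not Schnorr random uniformly relative to $A$, completing the contrapositive.
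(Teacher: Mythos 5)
Your proposal is correct and follows essentially the same route as the paper's proof: Fubini to get the Schnorr integral test $u(X)=\int t^X\,d\mu$, the key lemma (Lemma~\ref{lem:key-lemma}) applied to $u$, and then a greedy capping of the enumeration of $t^X$ so that its integral equals the computable bound --- your clamp factors $\beta_k(X)$ are exactly the paper's three-case definition of $\hat{t}^X$. The only cosmetic difference is at the end: the paper simply fixes (non-uniformly) the single $h$ with $h(A)=u(A)$, which suffices for the existential statement, whereas your weighted sum $s^X=\sum_n \frac{2^{-n}}{1+h_n(X)}\,s^{X,n}$ packages all $n$ into one uniform test, a harmless and slightly more uniform variant.
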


\begin{proof}
($\Rightarrow$) This direction is correctly proved in \citelow{Miyabe:2011zr}.

($\Leftarrow$) Assume $A \oplus B$ is not Schnorr random and that $A$ is Schnorr random.  There is some Schnorr integral test $t$ such that  $t(A \oplus B)=\infty$.  We wish to show that $B$ is not Schnorr random uniformly relative to $A$.  For each $X$, define $t^X(Y) = t(X \oplus Y)$.  Clearly, $t^A(B)=t(A \oplus B)=\infty$.

While $\langle t^X\rangle_{X\in 2^\omega}$ may not be a uniform Schnorr integral test, we will construct a uniform Schnorr integral test $\langle \hat{t}^X\rangle_{X\in 2^\omega}$ such that $\hat{t}^A = t^A$.  For each $X$, the function $t^X$ is lower semicomputable with a code uniformly computable from $X$. Define, $u(X)=\int \! t^X(Y) \, d\mu(Y)$.  Notice $u$ is lower semicomputable  and by Fubini's theorem
\[ \int \! u(X) \, d\mu(X)=\iint \! t(X \oplus Y) \, d\mu(Y) \, d\mu(X) = \int \! t \, d\mu\]
which is computable.
It follows that $u$ is a Schnorr integral test.  By the key lemma (Lemma~\ref{lem:key-lemma}), there is some total computable function $h\leq u$ such that $u(A)=h(A)$. Let $\hat{t}^X$ be $t^X$, but enumerated only so that $\int \! \hat{t}^X \, d\mu = h(X)$.  More formally, for all $X$ let $\langle g^X_n\rangle$ be some code for $t$, namely some sequence of total nonnegative computable functions such that $t^X = \sum_n g^X_n$.  Label the integrals of the partial sums $c^X_n = \int \! \sum_{k < n} g^X_k \, d\mu$. Define 
\[ \hat{t}^X = \sum_n
  \begin{cases}
  g^X_n & \text{if } c^X_{n+1}  < h(X) \\
  \left(\frac{h(X)-c^X_{n}}{c^X_{n+1}- c^X_{n}} \right)\cdot g^X_{n} & \text{if } c^X_n \leq h(X) \leq c^X_{n+1} \\
  0 & \text{if } h(X) < c^X_n \\
  \end{cases}.
\]
Then $\langle \hat{t}^X\rangle_{X\in 2^\omega}$ is a uniform Schnorr integral test where $\int \! \hat{t}^X \, d\mu = h(X)$ and if $h(X)=u(X)$ then $\hat{t}^X=t^X$.  Since  $h(A)=u(A)$, we have $\hat{t}^A(B)=t^A(B)=\infty$.  Therefore $B$ is not Schnorr random uniformly relative to $A$.
\end{proof}

\section{Uniformly relative computable randomness}
\label{sec:comp-rand}

In this section we give a weaker version of van Lambalgen's theorem for uniformly relative computable randomness.

Recall that a \df{martingale} is a function $d\colon 2^{<\omega} \rightarrow \mathbb[0,\infty)$ such that $d(\sigma0)+d(\sigma 1) = 2d(\sigma)$ for all $\sigma \in 2^{<\omega}$. We will say a martingale $d$ \df{succeeds} on a set $A\in2^\omega$ if $\limsup_n d(A\upharpoonright n) = \infty$.  A set $A$ is \df{computably random} (or \df{recursively random}) if there is no computable martingale $d$ which succeeds on $A$.  A well-known alternate characterization is that $A$ is computably random if and only if there is no computable martingale $d$ such that $\lim_n d(A\upharpoonright n) = \infty$.

\begin{definition}
A \df{uniform martingale test} is a total computable map $\Phi\colon 2^\omega \rightarrow \mathbb{N}^\mathbb{N}$ such that each $\Phi(X)$ encodes a martingale.  Also, call a collection $\langle d^X\rangle_{X\in 2^\omega}$ a \df{uniform martingale test} if it is given by a map $\Phi\colon 2^\omega \rightarrow \mathbb{N}^\mathbb{N}$ as above.
\end{definition}

\begin{definition}[Miyabe \citelow{Miyabe:2011zr}]
$A$ is \df{computably random uniformly relative to $B$} if there is no uniform martingale test $\langle d^X\rangle_{X\in 2^\omega}$ such that $d^B$ succeeds on $A$.
\end{definition}

Uniformly relative computable randomness was previously called ``truth-table reducible randomness" in \citelow{Miyabe:2011zr}.  In Section~\ref{sec:equiv} we give an alternative definition using truth-table reducibility in the spirit of Franklin and Stephan.

The first author \citelow{Miyabe:2011zr} claimed the following, but the proof of the ``$\Leftarrow$" direction is incorrect.%
\footnote{%
The logical error in \citelow[Theorem~5.10]{Miyabe:2011zr} is in the following formula.
\[ \mu(\{Y\mid A\oplus Y\in W_{n}\}\cap[\tau])\leq\frac{\nu(A\upharpoonright m\oplus\tau)}{\nu(A\upharpoonright m\oplus\lambda)}2^{-n} \]
The first $A$ should be $[A\upharpoonright m]$ and the $\in$ should be $\subseteq$. Then the next line about letting $m=0$ does not hold.

The conceptual error is as follows.  The proof as usual starts by assuming $A \oplus B$ is not computably random.  The witnessing test is a bounded Martin-L{\"o}f test with bounding measure $\nu$.  The proof attempts to show that the bounding measure $\tau \mapsto \nu(\lambda \oplus \tau)$ (where $\lambda$ is the empty string) witnesses that $B$ is truth-table reducibly random relative to $A$.  However, $\tau \mapsto \nu(\lambda \oplus \tau)$ (that is, the marginal distribution of the second coordinate) is computable (not just computable relative to $A$). Hence this argument would show that $B$ is not computably random, which is too strong.%
}
\begin{quote}
$A \oplus B$ is computably random if and only if $A$ is computably random and $B$ is computably random uniformly relative to $A$.
\end{quote}

However, we can prove this weaker version of van Lambalgen's theorem for uniformly relative computable randomness.

\begin{theorem}\label{thm:VL-tt-cr}
$A \oplus B$ is computably random if and only if $A$ is computably random uniformly relative to $B$ and $B$ is computably random uniformly relative to $A$.
\end{theorem}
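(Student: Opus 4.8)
The plan is to prove the two directions by quite different means. The forward direction ``$\Rightarrow$'' will follow immediately from the \emph{correct} half of Miyabe's earlier theorem together with a symmetry argument, so the genuine content lies entirely in the reverse direction ``$\Leftarrow$'', which I would handle by a multiplicative decomposition of a single computable martingale on the join. For ``$\Rightarrow$'', recall that the forward direction of the (otherwise flawed) theorem quoted above from \cite{Miyabe:2011zr} is correct: if $A\oplus B$ is computably random, then $B$ is computably random uniformly relative to $A$. Since interchanging the even and odd coordinates is a computable, measure-preserving homeomorphism of $2^\omega$, computable randomness of $A\oplus B$ is equivalent to that of $B\oplus A$; applying the same forward direction to $B\oplus A$ then yields that $A$ is computably random uniformly relative to $B$. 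This gives both conjuncts.

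For ``$\Leftarrow$'' I would argue by contraposition. Suppose $A\oplus B$ is not computably random. By the alternate characterization of computable randomness recalled above, there is a computable martingale $d$ with $\lim_n d((A\oplus B)\upharpoonright n)=\infty$. Replacing $d$ by $(d+1)/2$ (still a computable martingale, and still tending to $\infty$), I may assume $d\geq \tfrac12>0$ everywhere, so that the multiplicative increments $r_k := d((A\oplus B)\upharpoonright(k+1))/d((A\oplus B)\upharpoonright k)$ are well defined, positive, and computable. The increments at even positions bet on the bits of $A$, and those at odd positions bet on the bits of $B$, so that
\[
\log d\big((A\oplus B)\upharpoonright 2n\big) \;=\; \log d(\lambda) \;+\; \underbrace{\textstyle\sum_{j<n}\log r_{2j}}_{=:\,S^A_n} \;+\; \underbrace{\textstyle\sum_{j<n}\log r_{2j+1}}_{=:\,S^B_n}.
\]
The hypothesis forces $S^A_n+S^B_n\to\infty$, and hence at least one of $\limsup_n S^A_n=\infty$ or $\limsup_n S^B_n=\infty$ must hold (otherwise both partial sums would be bounded above and so would their sum).

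To convert the even increments into a test I would define a uniform martingale test $\langle d^X\rangle$ betting on the first coordinate with the second coordinate as parameter, and symmetrically a test $\langle e^X\rangle$ betting on the second coordinate, by
\[
d^X(\sigma) = \prod_{j<|\sigma|} \frac{d\big((\sigma\upharpoonright(j{+}1))\oplus(X\upharpoonright j)\big)}{d\big((\sigma\upharpoonright j)\oplus(X\upharpoonright j)\big)}, \qquad e^X(\tau) = \prod_{j<|\tau|} \frac{d\big((X\upharpoonright(j{+}1))\oplus(\tau\upharpoonright(j{+}1))\big)}{d\big((X\upharpoonright(j{+}1))\oplus(\tau\upharpoonright j)\big)}.
\]
A one-line computation using $d(\rho 0)+d(\rho 1)=2d(\rho)$ shows that each $d^X$ and each $e^X$ is a martingale with initial capital $1$; and since $d$ is computable and bounded away from $0$, the maps $X\mapsto d^X$ and $X\mapsto e^X$ are total computable, so these are genuine uniform martingale tests. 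By the interleaving of coordinates the defining products telescope to $d^B(A\upharpoonright n)=\prod_{j<n}r_{2j}=\exp(S^A_n)$ and $e^A(B\upharpoonright n)=\prod_{j<n}r_{2j+1}=\exp(S^B_n)$. Thus if $\limsup_n S^A_n=\infty$ then $d^B$ succeeds on $A$, so $A$ is not computably random uniformly relative to $B$; and if $\limsup_n S^B_n=\infty$ then $e^A$ succeeds on $B$. In either case one of the two relativized randomness hypotheses fails, which is the required contrapositive.

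The step I expect to be the crux is precisely the limsup splitting: the decomposition guarantees only that \emph{at least one} of the two partial sums has infinite $\limsup$, and in general we cannot say which. This is exactly why the \emph{symmetric} statement (each half random relative to the other) is provable here, whereas the asymmetric van Lambalgen statement is not. Any attempt to route all of the martingale's gains onto a single prescribed coordinate founders on the fact that the conditional ``second-coordinate'' strategy is not a martingale, since the interleaving shifts the conditioning at each step---the obstruction described in the footnote to the false claim preceding this theorem.
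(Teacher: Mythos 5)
Your proof is correct and takes essentially the same route as the paper's: your martingales $d^X$ and $e^X$ are exactly the paper's uniform martingale tests $d^X_0$ and $d^X_1$ defined via the multiplicative representation $\widetilde{d}(\sigma i)=d(\sigma i)/d(\sigma)$, and your telescoping identity $d^B(A\upharpoonright n)\cdot e^A(B\upharpoonright n)=d((A\oplus B)\upharpoonright 2n)/d(\lambda)$ followed by the limsup split is precisely how the paper concludes, with your $(d+1)/2$ normalization just making explicit the paper's ``without loss of generality $d(\sigma)>0$.'' The paper likewise gets ``$\Rightarrow$'' from the correct half of \cite{Miyabe:2011zr} plus symmetry; your only loose point is the blanket appeal to invariance of computable randomness under computable measure-preserving homeomorphisms, which is not obvious in general but is easily justified for the pairwise swap by averaging the swapped martingale over the one not-yet-read bit.
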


Before giving the formal proof, we give the main idea.  Assume some computable martingale $d$ satisfies $\lim_n d((A \oplus B)\upharpoonright n) = \infty$.  Then split $d$ into two martingales $d^{B}_0$ and $d^{A}_1$. Here $d^{B}_0$ uses $B$ to bet on the $n$th bit of some set $X$ with the same relative amount that $d$ bets on the $2n$th bit of $X \oplus B$.  The martingale $d^{A}_1$ is defined similarly.  We will show $d((A + B) \upharpoonright 2n) = d^{B}_0(A\upharpoonright n) \cdot d^{A}_1(B\upharpoonright n)$.  So  either $d^{B}_0$ succeeds on $A$ or $d^{A}_1$ succeeds on $B$.

To prove the theorem, we introduce the following notation.  Given a martingale $d$ such that $d(\sigma)\neq 0$ for all $\sigma \in 2^{<\omega}$, define $\widetilde{d}(\sigma i) = \frac{d(\sigma i)}{d(\sigma)}$ for $i\in \{0,1\}$.  This codes the martingale by the relative changed at each step.  The martingale can be recovered by $d(\sigma) = \prod_{m=1}^{|\sigma|} \widetilde{d}(\sigma \upharpoonright m)$ (assuming $d(\varnothing)=1$).   Also $\widetilde{d}$ codes a martingale if and only if $\widetilde{d}(\sigma0) + \widetilde{d}(\sigma1) = 2$ for all $\sigma$.  Call $\widetilde{d}$ the \df{multiplicative representation} of the martingale $d$.

Another notion will be as follows.  If $\sigma,\tau\in 2^{<\omega}$, then define $f=\sigma \oplus \tau$ as a partial function from $\mathbb{N}$ to $\{0,1\}$ such that $f(2n)=\sigma(n)$ if $n<|\sigma|$ and $f(2n+1) = \tau(n)$ if $n<|\tau|$.

\begin{proof}[Proof of Theorem~\ref{thm:VL-tt-cr}]
Assume $A \oplus B$ is not computably random.  Then there is a computable martingale $d$ such that $\lim_n d((A \oplus B)\upharpoonright n) = \infty$.  (Without loss of generality, $d(\varnothing)=1$ and $d(\sigma)>0$ for all $\sigma$.) Define two uniform martingale tests $\langle d^X_0 \rangle_{X\in 2^\omega}$ and $\langle d^X_1 \rangle_{X\in 2^\omega}$ by  their corresponding multiplicative representations as follows.
\begin{align*}
  \widetilde{d}^Y_0(\sigma) &=  \widetilde{d}(\sigma \oplus (Y\upharpoonright |\sigma| - 1))\\
  \widetilde{d}^X_1(\tau) &= \widetilde{d}((X\upharpoonright |\tau|)  \oplus \tau)
\end{align*}
Then $\widetilde{d}^Y_0$ (and similarly $\widetilde{d}^X_1$) is a multiplicative martingale since 
\[ 
\widetilde{d}^Y_0(\sigma0) + \widetilde{d}^Y_0(\sigma1) = \widetilde{d}((\sigma \oplus (Y\upharpoonright |\sigma|))^{\smallfrown}0) + \widetilde{d}((\sigma \oplus (Y\upharpoonright |\sigma|))^{\smallfrown}1)=2.
\]
Now for $X=A$ and $Y=B$ we have that
\begin{align*}
d^{B}_0(A \upharpoonright n) \cdot d^{A}_1(B \upharpoonright n) &= \left( \prod_{m=1}^{n} \widetilde{d}^{B}_0(A \upharpoonright m) \right) \cdot \left( \prod_{m=1}^{n} \widetilde{d}^{A}_1(B \upharpoonright m) \right) \\
&= \prod_{m=1}^{2n} \widetilde{d}((A + B) \upharpoonright m)\\
&= d((A + B) \upharpoonright 2n) \rightarrow \infty
\end{align*}
Then either $d^{B}_0$ succeeds on $A$ or $d^{A}_1$ succeeds on $B$.  It follows that one of $A,B$ is not computably random uniformly relative to the other.
\end{proof}

We also obtain this easy corollary.  Since it follows from the ``$\Rightarrow$" direction of Theorem~\ref{thm:VL-tt-cr}, it was provable from the results of \citelow{Miyabe:2011zr}.

\begin{corollary}
There exist sets $A$ and $B$ such the $A$ is computably random \emph{uniformly relative} to $B$, but $A$ is not computably random \emph{relative} to $B$.
\end{corollary}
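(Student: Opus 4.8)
The plan is to prove the corollary as a direct consequence of the ``$\Rightarrow$'' direction of Theorem~\ref{thm:VL-tt-cr}, combined with the known failure of the standard (non-uniform) van Lambalgen theorem for computable randomness established in the literature. The key observation is that uniform relativization is strictly weaker than ordinary relativization, so that the failure of the classical theorem for computable randomness forces a witness separating the two notions.

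First I would recall the result of Merkle et al.\ \citelow{Merkle:2006vn} (as extended by the authors cited in the introduction) that the ``$\Rightarrow$'' direction of van Lambalgen's theorem fails for computable randomness: there exist sets $A$ and $B$ such that $A \oplus B$ is computably random, yet it is \emph{not} the case that $A$ is computably random and $B$ is computably random \emph{relative} to $A$. In particular one may extract a pair $A, B$ with $A \oplus B$ computably random but $B$ not computably random relative to $A$ (one checks that the first coordinate of a computably random join is itself computably random, so the failure must occur in the relativized clause). This supplies the ``not computably random relative to $B$'' half of the desired separation, after relabeling coordinates.

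Next I would apply the ``$\Rightarrow$'' direction of Theorem~\ref{thm:VL-tt-cr} to this same pair: since $A \oplus B$ is computably random, both $A$ is computably random uniformly relative to $B$ and $B$ is computably random uniformly relative to $A$. This immediately gives a single set that is computably random uniformly relative to the other coordinate, while (by the previous paragraph) failing to be computably random in the ordinary relativized sense. Concretely, taking the coordinate whose relativized randomness fails in the classical theorem, that coordinate is computably random uniformly relative to the other by Theorem~\ref{thm:VL-tt-cr} but not computably random relative to it, which is exactly the statement of the corollary.

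The main subtlety to get right is the direction of the inequality between the two notions and the careful bookkeeping of which coordinate plays which role, so that the classical failure and the uniform success are asserted of the \emph{same} relativization. This is a matter of matching conventions rather than a genuine obstacle, since the existence of the base pair comes ready-made from \citelow{Merkle:2006vn} and the extensions, and Theorem~\ref{thm:VL-tt-cr} does all the remaining work. Because every step is an application of an already-stated result, no new construction is needed; the proof is essentially a two-line deduction once the base example is quoted.
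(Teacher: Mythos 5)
Your proposal is correct and takes essentially the same approach as the paper: it combines the Merkle et al.\ counterexample (a pair $A,B$ with $A \oplus B$ computably random but $A$ not computably random relative to $B$) with the ``$\Rightarrow$'' direction of Theorem~\ref{thm:VL-tt-cr}. The only cosmetic difference is that you re-derive which coordinate fails relativized randomness from the failure of van Lambalgen's theorem, whereas the paper quotes the counterexample from \citelow{Merkle:2006vn} in exactly the needed form.
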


\begin{proof}
Merkle et al.\ \citelow{Merkle:2006vn} showed that there is are two computable randoms $A,B$ such that $A \oplus B$ is computably random, but $A$ is not computably random relative to $B$.  However, by Theorem~\ref{thm:VL-tt-cr}, $A$ is computably random uniformly relative to $B$.
\end{proof}

\subsection{Remark on Kolmogorov-Loveland and Martin-L{\"o}f randomness}\label{sub:rmk-on-KL-ML}
\label{sec:tt-KL}

Recall that Kolmogorov-Loveland randomness is a notion of randomness similar to computable randomness, except the martingales do not need to bet on the bits in order.  These are called \df{nonmonotonic martingales}.  Also, the martingales may be \df{partial}, i.e.\ not defined on all inputs.  (See \citelow{Merkle:2006vn, Downey:2010ve, Nies:2009qf} for formal definitions and more information on Kolmogorov-Loveland randomness.) The proof of Theorem~\ref{thm:VL-tt-cr} is similar to the proof of the following.

\begin{theorem}[Merkle et al.\ \citelow{Merkle:2006vn}]\label{thm:VL-KL}
$A \oplus B$ is Kolmogorov-Loveland random if and only if $A$ and $B$ are Kolmogorov-Loveland random relative to each other.
\end{theorem}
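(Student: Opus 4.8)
The plan is to mirror the proof of Theorem~\ref{thm:VL-tt-cr}, replacing monotonic martingales by nonmonotonic (possibly partial) ones and the join $A \oplus B$ by its even/odd interleaving, where even positions carry the bits of $A$ and odd positions carry the bits of $B$. Throughout I would use the multiplicative representation $\widetilde{d}$ introduced before Theorem~\ref{thm:VL-tt-cr}, together with the fact that the capital of a nonmonotonic martingale along any scan is the product of its per-query multipliers. Since Kolmogorov-Loveland randomness satisfies van Lambalgen's theorem with \emph{ordinary} relativization, the objects on the right-hand side are plain oracle martingales, not the uniform tests used for Schnorr and computable randomness.

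For the ``$\Rightarrow$'' direction I would argue by contraposition and symmetry. Assuming $B$ is not Kolmogorov-Loveland random relative to $A$, fix a nonmonotonic $A$-oracle martingale $d^A$ that succeeds on $B$, and build an unrelativized nonmonotonic martingale $d'$ succeeding on $A \oplus B$. The martingale $d'$ simulates $d^A$: whenever $d^A$ makes an oracle query to a not-yet-seen bit of $A$, the martingale $d'$ scans the corresponding even position of $A \oplus B$ and places the trivial bet $\widetilde{d'}(\cdot\,0)=\widetilde{d'}(\cdot\,1)=1$, thereby learning that bit of $A$ at no cost to its capital; whenever $d^A$ bets on a bit of $B$, the martingale $d'$ scans the corresponding odd position and copies the bet. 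Memoizing the even bits already learned ensures $d'$ never scans a position twice. Then $d'$ has the same capital as $d^A$ along the run, so it succeeds on $A \oplus B$, contradicting its randomness.

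For the harder ``$\Leftarrow$'' direction I would again argue by contraposition. Fix a nonmonotonic martingale $d$ with $\limsup$ capital equal to $\infty$ on $A \oplus B$, normalized so that $d(\varnothing)=1$ and $d(\sigma)>0$. I would partition the steps of $d$'s run according to the parity of the position queried, and define two nonmonotonic oracle martingales $d^{B}_0$ (computable with oracle $B$) and $d^{A}_1$ (computable with oracle $A$) by their multiplicative representations. The martingale $d^{B}_0$ simulates $d$, using the oracle $B$ to answer every odd-position query for free (no bet and no scan on $A$) and genuinely scanning and betting on $A$ exactly at the even-position queries; the martingale $d^{A}_1$ is defined symmetrically. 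By construction the even-position multipliers all accrue to $d^{B}_0$ and the odd-position multipliers to $d^{A}_1$, so at the point where $d$ has processed its first $k$ queries one has the factorization $d((A\oplus B)\text{-run}) = d^{B}_0(A\text{-run}) \cdot d^{A}_1(B\text{-run})$, exactly as in Theorem~\ref{thm:VL-tt-cr}. Since the left-hand capital is unbounded, at least one factor is unbounded---if both were bounded so would their product be---so either $d^{B}_0$ succeeds on $A$ or $d^{A}_1$ succeeds on $B$, witnessing that one of $A,B$ is not Kolmogorov-Loveland random relative to the other.

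The main obstacle, and the reason this is not literally the computation in Theorem~\ref{thm:VL-tt-cr}, is the bookkeeping for the nonmonotonic, data-dependent scan together with partiality: one must verify that $d^{B}_0$ and $d^{A}_1$ are legitimate partial nonmonotonic oracle martingales---that their scans never revisit a position and that each is defined exactly when $d$ is---and that success transfers correctly under the $\limsup$ definition despite the two capital sequences being read off a common but irregularly interleaved timeline. Once this accounting is in place, the multiplicative factorization and the elementary observation about unbounded products complete the argument.
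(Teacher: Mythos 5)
Your proposal is correct and takes essentially the same approach as the source: the paper does not reprove this theorem but cites Merkle et al., whose argument is exactly the parity-splitting you describe (simulate $d$ with oracle answers substituted for one parity of queries so that the capital factors as $d^{B}_0(A\text{-run})\cdot d^{A}_1(B\text{-run})$), and the paper itself remarks that its proof of Theorem~\ref{thm:VL-tt-cr} is modeled on this. Your handling of the ``$\Rightarrow$'' direction via free multiplier-$1$ scans of the even positions, and of partiality and the $\limsup$ success criterion, matches the standard treatment, so there is no gap.
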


One may ask if there is a notion of ``uniformly relative Kolmogorov-Loveland randomness" using uniform partial nonmonitonic martingale tests.  The answer is that being Kolmogorov-Loveland random uniformly relative to $B$ is the same as being Kolmogorov-Loveland random relative to $B$.  Any nonmonotonic martingale computable from $B$, is easily extended to a uniform partial nonmonotonic martingale.  This is easy to do because the nonmonotonic martingale needs only be partial computable from the oracle.

Similarly, ``Martin-L{\"o}f random uniformly relative to $B$" is equivalent to Martin-L{\"o}f random relative to $B$.  If $\langle U_n \rangle$ is a Martin-L{\"o}f test relative to $B$, then there is a uniform Martin-L{\"o}f test $\langle U^X_n \rangle_{n\in \mathbb{N}, X\in 2^\omega}$ given by using the same algorithm (as for $\langle U_n \rangle$) to enumerate the basic open sets of $U^X_n$, but we stop enumerating the basic open sets if doing so will make $\mu(U^X_n)>2^{-n}$.

\section{Truth-table Schnorr and truth-table reducible randomness}
\label{sec:equiv}

\subsection{Truth-table Schnorr randomness is equivalent to uniformly relative Schnorr randomness}
Truth-table Schnorr randomness was first defined by Franklin and Stephan \citelow{Franklin:2010ys} as follows using a truth-table relativized martingale test.  Recall that a function $f\in \mathbb{N}^\mathbb{N}$ is \df{truth-table reducible} to $A\in2^\omega$ if there is a total computable functional $\Phi\colon 2^\omega \rightarrow \mathbb{N}^\mathbb{N}$ such that $f=\Phi(A)$.

\begin{definition}[Franklin and Stephan \citelow{Franklin:2010ys}]\label{defn:tt-Schnorr}
A set $A$ is \df{truth-table Schnorr relative to $B$} if there is no pair $(d,f)$ consisting of a martingale $d$ with code truth-table reducible to $B$ and a function $f\colon \mathbb{N} \rightarrow \mathbb{N}$ truth-table reducible to $B$ such that $\exists^\infty n\, [d(A \upharpoonright f(n)) \geq n]$.
\end{definition}

Franklin and Stephan also remark that one may take $f$ in the above definition to be computable (instead of truth-table reducible to $B$) with no loss.

The first author showed in \citelow{Miyabe:2011zr} that truth-table Schnorr randomness is equivalent to (what we call) uniformly relative Schnorr randomness, but there was a small gap in the proof which we feel would be instructive to fill in here.

\begin{lemma}\label{lem:tt-to-uniform}
Let $d$ be a martingale (with some code) truth-table reducible to $A$ and let $f\colon \mathbb{N} \rightarrow \mathbb{N}$ be truth-table reducible to $A$.  Then there is a uniform martingale test $\langle \hat{d}^X\rangle_{X\in2^\omega}$ and a uniform function $\langle \hat{f}^X\rangle_{X\in2^\omega}$ such that $\hat{d}^A=d$ and $\hat{f}^A=f$.
\end{lemma}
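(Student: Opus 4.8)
The plan is to handle $f$ and $d$ separately, since $f$ requires no repair while all the content lies in $d$. For $f$, the definition of truth-table reducibility gives a total computable functional $\Phi_f\colon 2^\omega\to\mathbb{N}^\mathbb{N}$ with $\Phi_f(A)=f$, and since every $\Phi_f(X)$ is already a total element of $\mathbb{N}^\mathbb{N}$, the choice $\hat f^X=\Phi_f(X)$ immediately yields a uniform function with $\hat f^A=f$. For $d$ we are given a total computable functional $\Psi\colon 2^\omega\to\mathbb{N}^\mathbb{N}$ with $\Psi(A)$ a code for the martingale $d$, but for $X\neq A$ the object coded by $\Psi(X)$ can fail to be a martingale in two distinct ways: the rational sequences it lists for the individual values $d(\sigma)$ need not be fast-Cauchy (so they may name no real at all), and even when they do, the resulting function on $2^{<\omega}$ need not satisfy the averaging identity $d(\sigma0)+d(\sigma1)=2d(\sigma)$. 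Both defects must be corrected uniformly and in a way that leaves the honest code $\Psi(A)$ untouched.

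First I would remove the convergence problem. For each $X$ and $\sigma$, let $\langle q^X_{\sigma,s}\rangle_s$ be the rational sequence that $\Psi(X)$ assigns to $\sigma$, and replace it by the sequence obtained by following $\langle q^X_{\sigma,s}\rangle_s$ as long as it continues to satisfy the fast-Cauchy condition and \emph{freezing} it at the last good value the moment that condition is first violated. The frozen sequence is always fast-Cauchy, hence names a real $g^X(\sigma)$ computable uniformly in $X$ and $\sigma$; and when the original sequence is genuinely fast-Cauchy---in particular whenever $X=A$---no freezing ever occurs, so $g^A(\sigma)=d(\sigma)$.

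Next I would force the averaging identity by an inductive clamping construction. Set $\hat d^X(\varnothing)=\max\{g^X(\varnothing),0\}$, and having defined $\hat d^X(\sigma)$ put
\[
\hat d^X(\sigma0)=\min\bigl\{\max\{g^X(\sigma0),0\},\,2\hat d^X(\sigma)\bigr\},\qquad \hat d^X(\sigma1)=2\hat d^X(\sigma)-\hat d^X(\sigma0).
\]
Then $\hat d^X(\sigma0),\hat d^X(\sigma1)\in[0,2\hat d^X(\sigma)]$ with average $\hat d^X(\sigma)$, so $\hat d^X$ is a genuine finite nonnegative martingale for \emph{every} $X$. Since $\min$ and $\max$ are computable operations on reals, the values are uniformly computable in $X$ and $\sigma$, so $\langle\hat d^X\rangle_{X\in2^\omega}$ is a uniform martingale test. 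A short induction on $|\sigma|$ then gives $\hat d^A=d$: we have $g^A(\varnothing)=d(\varnothing)\geq0$, and if $\hat d^A(\sigma)=d(\sigma)$ then $g^A(\sigma0)=d(\sigma0)$ already lies in $[0,2d(\sigma)]$, so both clamps are inert and $\hat d^A(\sigma0)=d(\sigma0)$, $\hat d^A(\sigma1)=d(\sigma1)$.

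The step I expect to be the crux is arranging the two corrections so that they are simultaneously total and computable uniformly in the oracle, guaranteed to output an honest martingale no matter what $\Psi(X)$ codes, and the identity whenever $\Psi(X)$ already codes a martingale. The freezing device secures the first two properties for convergence and the clamp secures them for the averaging identity; the fact that both leave valid data unchanged is what delivers $\hat d^A=d$, and it is precisely this last point---the tacit assumption that $\Psi(X)$ codes a martingale for every $X$---that was glossed over in the original argument.
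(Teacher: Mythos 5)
Your proposal is correct and takes essentially the same route as the paper's own proof: the same freezing trick to repair invalid fast-Cauchy codes, the same $\min$-clamp recursion to restore the averaging identity $\hat d^X(\sigma0)+\hat d^X(\sigma1)=2\hat d^X(\sigma)$, and the same trivial treatment of $f$ via a total functional. The only cosmetic difference is that you fold the nonnegativity correction $\max\{\cdot,0\}$ into the clamping recursion, whereas the paper applies it to each rational in the Cauchy sequence before freezing; both variants leave valid codes untouched, which is exactly what delivers $\hat d^A=d$.
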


\begin{proof}
Let $d$ be a martingale truth-table reducible to $A$.  And let $\Phi\colon 2^\omega \times 2^{<\omega} \rightarrow \mathbb{N}^\mathbb{N}$ be a total computable functional such that $\Phi(A,\sigma)$ encodes the fast-Cauchy code for $d(\sigma)$.  Define $d^X(\sigma)$ to be the real coded by $\Phi(X,\sigma)$.  Note, that there is no guarantee that, first, $\Phi(X,\sigma)$ is a Cauchy code for a nonnegative real for every $X$ and $\sigma$, and that, second, $d^X$ is a martingale for every $X$.

The first issue is easily fixed.   We use a folklore trick to force $\Phi(X,\sigma)$ to be a fast Cauchy code for a nonnegative number.  Let $\langle q_0, q_1, \ldots\rangle$ be the sequence of rationals given by $\Phi(X,\sigma)$.  Firstly, replace each $q_n$ with $\max\{q_n,0\}$.  Secondly, find the first $n$, if any, such that $|q_n - q_m| > 2^{-m}$ for $m \leq n$, then change the code to be $\langle q_0,q_1,\ldots, q_{n-1} \rangle^{\smallfrown}\langle q_{n-1},q_{n-1}\ldots\rangle$.  Notice this does not change the value of $d(A)$.

The second issue is also easily handled.  Assuming, now that each $d^X(\sigma)$ is a nonnegative real, define $\hat{d}^X(\sigma)$ by recursion as follows.
\begin{align*}
  \hat{d}^X(\varnothing) &= d^X(\varnothing) \\
  \hat{d}^X(\sigma 0) &= \min\{d^X(\sigma 0), 2 \hat{d}^X(\sigma)\} \\
  \hat{d}^X(\sigma 1) &= 2\hat{d}^X(\sigma) - \hat{d}^X(\sigma 0)
\end{align*}
It is easy to check $\hat{d}^X$ is a nonnegative martingale whose code is uniformly computable from the code for $d^X$.  Also if  $d^X$ is already a martingale, then $\hat{d}^X = d^X$.  In particular, $\hat{d}^A=d^A$.

Last if $f$ is truth-table reducible to $A$, then there is a total functional $\Psi\colon 2^\omega \rightarrow \mathbb{N}^\mathbb{N}$ such that $\Psi(A)=f$. Define $\hat{f}^X=\Psi(X)$.
 \end{proof}

Now we can show that the definitions are equivalent.

\begin{proposition}\label{prop:defs-same}
A set $A$ is Schnorr uniformly relative to $B$ if and only if $A$ is truth-table Schnorr relative to $B$.
\end{proposition}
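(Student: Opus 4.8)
The plan is to prove the two directions separately, showing that each type of test can be converted into the other while preserving the relevant witness. The statement asserts that ``Schnorr random uniformly relative to $B$'' (defined via uniform Schnorr tests $\langle U^X_n\rangle$) coincides with ``truth-table Schnorr relative to $B$'' (defined via a pair $(d,f)$ of martingale and speed-up function, each truth-table reducible to $B$). Since both notions are defined by the \emph{nonexistence} of a covering object, I would prove the equivalence by showing that a covering object of one type can be converted into a covering object of the other type. Concretely, I would establish: (i) if $A$ is \emph{not} truth-table Schnorr relative to $B$, then $A$ is \emph{not} Schnorr random uniformly relative to $B$; and (ii) the converse.

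\textbf{Direction (i): from a truth-table martingale test to a uniform Schnorr test.} Suppose $(d,f)$ witnesses that $A$ is not truth-table Schnorr relative to $B$, so $d$ and $f$ are truth-table reducible to $B$ and $\exists^\infty n\,[d(A\!\upharpoonright\! f(n)) \geq n]$. First I would apply Lemma~\ref{lem:tt-to-uniform} to replace $d$ and $f$ by a uniform martingale test $\langle \hat d^X\rangle$ and a uniform function $\langle \hat f^X\rangle$ with $\hat d^B = d$ and $\hat f^B = f$. The task is then to convert, uniformly in $X$, the ``fast-growing martingale along a subsequence'' into a genuine Schnorr test. This is the classical martingale-to-test construction (Schnorr's theorem), which turns a computable martingale into a Schnorr test by letting $U^X_n$ be the set of $Y$ on which $\hat d^X$ ever reaches a threshold governed by $\hat f^X$; because the martingale has value $\hat d^X(\varnothing)$ at the root, the Kolmogorov inequality bounds $\mu(U^X_n)$, and one trims the enumeration (as in the remark at the end of Section~\ref{sec:tt-KL}) so that the measure is \emph{computably} controlled, making $\langle U^X_n\rangle$ a uniform \emph{Schnorr} test rather than merely Martin-L\"of. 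The condition $\exists^\infty n\,[d(A\!\upharpoonright\! f(n))\geq n]$ then guarantees $A \in \bigcap_n U^B_n$.

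\textbf{Direction (ii): from a uniform Schnorr test to a truth-table martingale test.} Conversely, suppose $\langle U^X_n\rangle$ is a uniform Schnorr test with $A \in \bigcap_n U^B_n$. I would use the standard construction of a martingale from a Schnorr test, applied uniformly in $X$: for each $n$ define a martingale $d^X_n$ whose value on $\sigma$ measures the conditional probability that $[\sigma]$ meets $U^X_n$, then form a suitable weighted combination $d^X = \sum_n 2^{-n} d^X_n$ (or the savings-account version) so that $d^X$ is computable uniformly in $X$ from the code $\Phi(X)$, hence truth-table reducible to $X$. Because each $\mu(U^X_n)$ is uniformly computable, one can extract a computable (or truth-table) speed-up function $\hat f^X$ recording the stage by which membership in $U^X_n$ is witnessed along $A$, yielding $\exists^\infty n\,[d^B(A\!\upharpoonright\! \hat f^B(n)) \geq n]$. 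Setting $d = d^B$ and $f = \hat f^B$, both truth-table reducible to $B$, witnesses that $A$ is not truth-table Schnorr relative to $B$.

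\textbf{The hard part} will be the ``small gap'' referred to in the paper's introduction to this subsection, namely ensuring that the measures of the test components stay \emph{uniformly computably} bounded when passing through the truth-table functional, so that the resulting tests are Schnorr (not merely Martin-L\"of) and the reducibilities are genuinely truth-table (total) rather than Turing (partial). In particular, in direction (i) the conversion must guarantee totality of the functional $X \mapsto \Phi(X)$ for \emph{all} oracles $X$, not just $X = B$, which is exactly why the trimming trick from Section~\ref{sec:tt-KL} and the code-sanitizing folklore trick in the proof of Lemma~\ref{lem:tt-to-uniform} are essential; I would lean on those to repair the gap that was present in Miyabe's original argument.
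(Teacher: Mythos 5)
Your proposal is correct and takes essentially the same route as the paper: first use Lemma~\ref{lem:tt-to-uniform} (and its trivial converse, evaluating a uniform test at $B$) to pass between truth-table-reducible pairs $(d,f)$ and uniform martingale/order tests, then exploit the fact that the standard conversions between martingale-with-order tests and Schnorr tests are effective in the codes, hence can be carried out uniformly in the oracle $X$. The paper compresses your two explicit directions into the citation that ``the proofs in the literature are effective in this regard,'' so your added detail on measure trimming and totality of the functionals is just an unpacking of what the paper leaves to the references.
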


\begin{proof}
By Lemma~\ref{lem:tt-to-uniform} (and its trivial converse) truth-table Schnorr randomness is equivalent to that obtained from uniform martingale tests of the above type. Now similar to Proposition~\ref{prop:unif-Sch-int-test}, it is enough to show a uniform martingale test (of the above type) can effectively be converted to a Schnorr test which covers the same points that the martingale succeeds on, and vice versa. Indeed, the proofs in the literature are effective in this regard (see \citelow{Franklin:2010ys,Downey:2010ve,Nies:2009qf}).
\end{proof}

\begin{remark}
There is an important subtlety in the last proof similar to Remark~\ref{rem:proof-is-uniform}.  We showed it is possible to compute \emph{a code for} one test uniformly from \emph{a code for} another test. However, it is not necessarily possible to do so in a way that is independent of the choice of codes.  For example, it is known that one may effectively replace a real-valued martingale $d$ with a rational-valued martingale $\hat{d}$ that succeeds on the same points.  However, there is some $d$ such that different codes for $d$ lead to different rational approximations.  This will become an issue if instead of relativizing with respect to a set $X\in2^\omega$, one relativized with respect to a real $x\in[0,1]$.
\end{remark}

\subsection{Truth-table reducible randomness is equivalent to uniformly relative computable randomness}

We also have a similar result for computable randomness.  For this reason, uniformly relative computable randomness is also known as \df{truth-table reducible randomness} \citelow{Miyabe:2011zr}.

\begin{proposition}\label{prop:defs-same-cr}
A set $A$ is computably random uniformly relative to $B$ if and only if for all martingales $d$ with a code truth-table reducible to $B$, we have that $\limsup_n d(A\upharpoonright n) < \infty$.
\end{proposition}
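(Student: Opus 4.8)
The plan is to prove the equivalence by reducing both sides to the existence of a \emph{single} martingale that succeeds on $A$, and then passing between the ``uniform test'' formulation and the ``truth-table reducible'' formulation using Lemma~\ref{lem:tt-to-uniform} together with its trivial converse, exactly as in the proof of Proposition~\ref{prop:defs-same}. The crucial simplification relative to the Schnorr case is that here the relevant notion of success is simply $\limsup_n d(A \upharpoonright n) = \infty$, with no auxiliary order function $f$ to track. Consequently the two success conditions coincide verbatim, and there is no need to convert between martingale tests and Schnorr (integral) tests; we can stay entirely in the world of martingales.

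I would argue the contrapositive of each direction. For the direction ``if every martingale truth-table reducible to $B$ stays bounded on $A$, then $A$ is computably random uniformly relative to $B$'', suppose instead that $A$ is \emph{not} computably random uniformly relative to $B$. Then there is a uniform martingale test $\langle d^X\rangle_{X\in 2^\omega}$, given by a total computable $\Phi\colon 2^\omega \to \mathbb{N}^\mathbb{N}$, with $d^B$ succeeding on $A$. The code $\Phi(B)$ of $d^B$ is truth-table reducible to $B$ by the very definition of truth-table reducibility, and $\limsup_n d^B(A \upharpoonright n) = \infty$. This is the trivial converse of Lemma~\ref{lem:tt-to-uniform}, and it produces the desired witnessing truth-table reducible martingale.

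For the other direction, suppose some martingale $d$ with a code truth-table reducible to $B$ satisfies $\limsup_n d(A \upharpoonright n) = \infty$. Applying Lemma~\ref{lem:tt-to-uniform} (with $B$ in the role of $A$, and taking the auxiliary function to be any computable function, since it plays no role here) yields a uniform martingale test $\langle \hat{d}^X\rangle_{X\in 2^\omega}$ with $\hat{d}^B = d$. Since the lemma delivers $\hat{d}^B$ equal to $d$ as a martingale, we have $\hat{d}^B(A \upharpoonright n) = d(A \upharpoonright n)$ for every $n$, so $\hat{d}^B$ succeeds on $A$ and hence $A$ is not computably random uniformly relative to $B$.

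The only point requiring care---and the closest thing to an obstacle---is verifying that Lemma~\ref{lem:tt-to-uniform} genuinely preserves the success condition. Because that lemma is stated so that $\hat{d}^A = d$ holds \emph{exactly} (not merely up to a multiplicative constant, nor merely succeeding on the same points), the value $\limsup_n$ is transported unchanged and no additional estimate is needed. This is precisely where the present proof is easier than Proposition~\ref{prop:defs-same}: there one must also move between martingale success and Schnorr-test coverage while tracking the function $f$, whereas here that entire layer is absent.
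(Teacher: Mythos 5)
Your proof is correct and is essentially the paper's own argument, only written out in full: the paper likewise handles one direction by applying (the proof of) Lemma~\ref{lem:tt-to-uniform} to convert a martingale with code truth-table reducible to $B$ into a uniform martingale test with $\hat{d}^B=d$, and dismisses the other direction as trivial, exactly your observation that $\Phi(B)$ is by definition a code truth-table reducible to $B$. Your added remarks---that the exact equality $\hat{d}^B=d$ transports the $\limsup$ unchanged, and that no order function or test conversion is needed here---are correct elaborations of why the paper's two-line proof suffices.
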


\begin{proof}
By the proof of Lemma~\ref{lem:tt-to-uniform} it is possible to pass from a martingale truth-table reducible to $B$ to a uniform martingale test.  The converse is trivial.
\end{proof}

\subsection{Characterizations of ``truth-table Schnorr randomness" by other tests}

Notice that the motivation behind Definition~\ref{defn:tt-Schnorr} is to say that $A$ is ``truth-table Schnorr random" relative to $B$ if there is no test for Schnorr randomness (e.g.\ Schnorr test, integral test, martingale test, etc.)\ which is truth-table reducible to $B$ and witnesses that $A$ is not random for that test.  Unfortunately, this method is very sensitive to the choice of test.  Lemma~\ref{lem:tt-to-uniform} does not hold for most other characterizations of Schnorr randomness, including the usual martingale and Schnorr test characterizations.  More specifically we will show that the following two natural-looking definitions of ``truth-table Schnorr randomness" are in fact strictly stronger than uniformly relative Schnorr randomness.

\begin{definition}\label{def:bad-truth-table}\ 
\begin{enumerate}
\item A set $A$ is \df{``truth-table (martingale) Schnorr random" relative to $B$} if there is no pair $(d,f)$ consisting of a martingale $d$ which a code truth-table reducible to $B$ and an order function $f\colon \mathbb{N} \rightarrow \mathbb{N}$ truth-table reducible to $B$ such that $\exists^\infty n\, [d(A \upharpoonright n) \geq f(n)]$.  (Recall, an \df{order} is an unbounded increasing function.)
\item The set $A$ is \df{``truth-table (test) Schnorr random" relative to $B$} if there is no sequence $\langle U_n \rangle$ of open sets with codes uniformly truth-table reducible to $B$ such that $\mu(U_n)=2^{-n}$ for all $n$ and $A\in U_n$ for all $n$. 
\end{enumerate}
\end{definition}

\begin{proposition}[Due to Stephan and Franklin  \citelow{Franklin:2010ys}]\label{prop:bad-mart-test}
There exist $A$ and $B$ such that $A$ is Schnorr random uniformly relative to $B$ but not ``truth-table (martingale) Schnorr random" relative to $B$.
\end{proposition}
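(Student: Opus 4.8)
The plan is to argue entirely in the martingale/order formulation, using the equivalence established in Lemma~\ref{lem:tt-to-uniform} and Proposition~\ref{prop:defs-same}: ``$A$ is Schnorr random uniformly relative to $B$'' means exactly that there is no pair $(d',f')$, both truth-table reducible to $B$, with $d'(A\upharpoonright f'(j)) \geq j$ for infinitely many $j$ (and by the Franklin--Stephan remark after Definition~\ref{defn:tt-Schnorr} one may even take $f'$ computable). So the goal is to build $A$ and $B$ that defeat every such \emph{good} pair, while admitting a \emph{bad} pair $(d,f)$, namely a $B$-truth-table martingale $d$ and a $B$-truth-table order $f$ with $d(A\upharpoonright n) \geq f(n)$ infinitely often.

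The starting point is that Schnorr randomness is strictly weaker than computable randomness. First I would arrange for a set $A$ that is Schnorr random but not computably random, together with a computable martingale $d$ (normalized so $d(\varnothing)=1$ and $d>0$) with $\limsup_n d(A\upharpoonright n) = \infty$. Let $n_1 < n_2 < \cdots$ enumerate the \emph{record positions}, those $n$ at which $\lfloor d(A\upharpoonright n)\rfloor$ strictly exceeds all earlier values, so that $d(A\upharpoonright n_k) \geq k$. Because $A$ is Schnorr random, for every \emph{computable} order $h$ one has $d(A\upharpoonright n) \geq h(n)$ only finitely often; thus the record positions are sparser, and the records shorter, than any computable rate can track. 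The bad pair is then produced by coding only the \emph{rate} of the records into $B$, never their exact locations: let $c_m$ be the number of record positions below $2^m$ and let $B$ code the sequence $\langle c_m\rangle$ (a very thin oracle, since the $c_m$ grow slower than any computable function). Define the $B$-truth-table order $f$ by $f(n) = c_m$ for $n \in [2^{m+1}, 2^{m+2})$. At each record position $n_k$, which lies in some block $[2^{m+1},2^{m+2})$ and hence above $2^m$, we have $f(n_k) = c_m \leq k-1 < d(A\upharpoonright n_k)$, so $d(A\upharpoonright n) \geq f(n)$ holds at every $n_k$. Hence $(d,f)$ is a bad pair and $A$ is not ``truth-table (martingale) Schnorr random'' relative to $B$.

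The hard part is verifying that $A$ nevertheless remains Schnorr random uniformly relative to this $B$, i.e.\ that no good pair $(d',f')$ succeeds. The key point is that $B$ reveals how many records occur below each scale $2^m$ but not \emph{where} inside a block they occur; since $d$ is tall only at the isolated record positions, the coarse counts do not let one point $f'(j)$ at a position where $A$ forces $d'$ above $j$. This is precisely the obstruction highlighted in the remark after Theorem~\ref{thm:VL-Schnorr}: the open sets attached to the bad pair have measure that is only lower semicomputable from $B$, never computable from $B$, so they can never be assembled into a uniform \emph{Schnorr} test. The intuition is that any $B$-truth-table good pair ought to ``declassify'' into a genuinely computable martingale and computable order still beating $A$ infinitely often, contradicting the Schnorr randomness of $A$.

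Making that declassification airtight is the main obstacle, and I do not expect the naive version (replacing the incomputable oracle $\langle c_m\rangle$ by a computable guess with finitely many errors) to go through directly, since $\langle c_m\rangle$ is genuinely noncomputable. The cleanest route is instead to build $A$ and $B$ together by a direct stage-wise construction: at each stage one extends $B$ to record the next coarse count, while extending $A$ both to keep $d$ winning slowly (to maintain the bad pair) and to diagonalize against the next candidate good pair $(d',f')$, choosing the bits of $A$ inside the current block so that the record positions remain unpredictable from the information already committed to $B$. The tension between coding enough rate information into $B$ and keeping $A$ random enough to defeat every good test is exactly what the construction must balance; this is essentially the construction of Franklin and Stephan \cite{Franklin:2010ys}.
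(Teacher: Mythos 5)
Your first half (the ``bad pair'') is sound in outline, and it is in fact the idea behind Franklin and Stephan's Theorem~2.2, which the paper cites for exactly this purpose: coding only the coarse record counts $c_m$ keeps the use computably bounded (since $c_m \leq 2^m$), so $f$ is truth-table and not merely Turing reducible to $B$, and the block arithmetic $f(n_k) = c_m \leq k-1 < d(A\upharpoonright n_k)$ checks out modulo an off-by-one in normalizing $d(\varnothing)$. The genuine gap is the other half: you never prove that $A$ is Schnorr random uniformly relative to your custom-built $B$, and you concede as much. The ``declassification'' heuristic is not a proof: $B$ encodes noncomputable information derived from $A$ (the sequence $\langle c_m\rangle$ is genuinely noncomputable, as you note yourself), so a $B$-truth-table good pair $(d',f')$ cannot simply be replaced by a computable pair, and slow growth of $\langle c_m\rangle$ by itself does not imply that this information is useless for testing $A$. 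Punting to ``a direct stage-wise construction\dots essentially the construction of Franklin and Stephan'' is circular: simultaneously maintaining the coding into $B$ and diagonalizing against every uniform Schnorr test is precisely the hard content, and it amounts to re-proving a lowness theorem from scratch.

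The paper sidesteps any construction by decoupling the two requirements via known results. Take $B$ Turing complete and Schnorr trivial \citelow{Downey2004a}; by Franklin and Stephan \citelow{Franklin:2010ys}, every Schnorr trivial is low for truth-table Schnorr randomness, so any Schnorr random $A$ automatically remains Schnorr random uniformly relative to $B$ --- this is the step your proposal leaves open. Then take $A \equiv_T \emptyset'$ Schnorr random but not computably random \citelow{Nies:2005nx}, so that $B \geq_T A$, and Franklin--Stephan's Theorem~2.2 supplies the computable martingale $d$ and the order $f$ truth-table reducible to $B$ with $\exists^\infty n\, [d(A\upharpoonright n) \geq f(n)]$. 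If you want a self-contained argument along your lines, the missing step you must supply is a verification that your coded $B$ is Schnorr trivial (or directly that it is low for uniformly relative Schnorr randomness); without it, the proposal establishes only the easy direction of the proposition.
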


\begin{proof}Downey, Griffiths, and LaForte \citelow{Downey2004a} showed that there is a Turing complete Schnorr trivial $B$.  Nies, Stephan, and Terwijn \citelow{Nies:2005nx} showed there is a Schnorr random $A \equiv_T \emptyset'$ that is not computably random. Take this to be our $A$ and $B$.  Franklin and Stephan \citelow{Franklin:2010ys} showed that every Schnorr trivial is low for truth-table Schnorr random.  This means that since $A$ is Schnorr random, we have that $A$ is truth-table Schnorr random relative to $B$, and hence Schnorr random uniformly relative to $B$.

Given such $A,B$ as above, Franklin and Stephan \citelow[Theorem~2.2]{Franklin:2010ys} construct a computable martingale $d$ (hence truth-table reducible to $B$) and an order function $f$ truth-table reducible to $B$ such that $\exists^\infty n\, [d(A \upharpoonright n) \geq f(n)]$.  Therefore $A$ is not ``truth-table (martingale) Schnorr random" relative to $B$.
\end{proof}

\begin{proposition}\label{prop:bad-tt-test}
``Truth-table (test) Schnorr randomness" is equivalent to relative Schnorr randomness.  Hence, there exist $A$ and $B$ such that $A$ is Schnorr random uniformly relative to $B$ but not ``truth-table (test) Schnorr random" relative to $B$.
\end{proposition}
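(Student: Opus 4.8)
The plan is to establish the stated equivalence between ``truth-table (test) Schnorr randomness'' and ordinary relative Schnorr randomness, and then to read off the existence statement from the known failure of van Lambalgen's theorem for the ordinary relativization. Throughout, ``Schnorr random relative to $B$'' is meant in the usual sense: there is no uniform sequence $\langle V_n\rangle$ of $\Sigma^0_1(B)$ sets with $\mu(V_n)\le 2^{-n}$, with $\mu(V_n)$ uniformly computable from $B$, such that $A\in\bigcap_n V_n$. One direction of the equivalence is immediate. If $\langle U_n\rangle$ witnesses that $A$ is not ``truth-table (test) Schnorr random'' relative to $B$ (Definition~\ref{def:bad-truth-table}), then each $U_n$ has a code truth-table reducible to $B$, hence is $\Sigma^0_1(B)$, and $\mu(U_n)=2^{-n}$ is computable, in particular computable from $B$. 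Thus $\langle U_n\rangle$ is already a Schnorr test relative to $B$ covering $A$; every truth-table (test) is a Schnorr test relative to $B$, giving this inclusion for free.

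The substance is the converse: from a Schnorr test $\langle V_n\rangle$ relative to $B$ covering $A$ I will build a truth-table (test) $\langle U_n\rangle$ covering $A$. The only two things to arrange are that $\mu(U_n)$ equals $2^{-n}$ \emph{exactly} (not merely $\le 2^{-n}$) and that the code is total computable in $B$, all while keeping $V_{n+1}\subseteq U_n$ so that $A\in V_{n+1}\subseteq U_n$. I would set $r=\mu(V_{n+1})\le 2^{-(n+1)}$, which is computable from $B$, and enumerate $V_{n+1}$ with oracle $B$ as an increasing clopen sequence $V[s]$. The idea is to inflate each stage to a moving target $\text{tgt}_s=2^{-n}-d_s$, where $d_s=r-\mu(V[s])\ge 0$ is the still-unenumerated mass of $V_{n+1}$, computable from $B$ and decreasing to $0$. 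At stage $s$ put $U_n[s]=U_n[s-1]\cup V[s]\cup(\text{new filler})$, adding basic open sets from the complement of $U_n[s-1]\cup V[s]$ until the measure lies in $[\text{tgt}_s-2^{-s},\text{tgt}_s]$. A short calculation ($\mu(V[s]\setminus U_n[s-1])\le d_{s-1}-d_s$) shows $\mu(U_n[s-1]\cup V[s])\le\text{tgt}_s$, so the filler step is always possible, and the complement has measure $\ge 1-2^{-n}>2^{-n}$, leaving ample room (for $n=0$ simply take $U_0=2^\omega$). Hence $\mu(U_n[s])\nearrow 2^{-n}$ and $U_n=\bigcup_s U_n[s]$ has measure exactly $2^{-n}$ with $V_{n+1}\subseteq U_n$. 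The procedure halts at every stage for every oracle, since it performs only a finite search after finitely many oracle queries and can emit a dummy interval in any degenerate case; so the map $X\mapsto(\text{code for }\langle U_n\rangle)$ is total computable, i.e.\ the code is truth-table reducible to $B$, the equality $\mu(U_n)=2^{-n}$ being required only at the true oracle $B$.

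The main obstacle is exactly this inflation step. Unlike the standard conversions between Schnorr tests, martingales, and integral tests (which only preserve the covered points and happily tolerate $\mu(V_n)\le 2^{-n}$), here filler added at an early stage may later be re-enumerated into $V_{n+1}$, causing the running measure to fall short of the target. The deficiency bookkeeping through $d_s$ is what absorbs these overlaps: charging the target to $2^{-n}-d_s$ rather than to a fixed value makes the undershoot self-correcting while never overshooting $2^{-n}$. This is the one place where knowing $\mu(V_{n+1})$ \emph{exactly}, rather than merely bounding it, is essential, and it is why the truth-table (test) definition fails to relax to uniformly relative randomness.

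Finally, the separation follows at once. By the failure of the ``$\Rightarrow$'' direction of van Lambalgen's theorem for ordinary relative Schnorr randomness \cite{Merkle:2006vn}, there are sets $A,B$ with $A\oplus B$ Schnorr random yet $A$ not Schnorr random relative to $B$. Since $A\oplus B$ and $B\oplus A$ are Schnorr random together, Theorem~\ref{thm:VL-tt-Schnorr} (applied to $B\oplus A$) shows that $A$ is Schnorr random uniformly relative to $B$. But by the equivalence just proved, $A$ not being Schnorr random relative to $B$ means $A$ is not ``truth-table (test) Schnorr random'' relative to $B$. Thus $A$ and $B$ are the required witnesses.
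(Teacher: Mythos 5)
Your proof is correct, and it coincides with the paper's for the first half while genuinely diverging on the second. For the equivalence, both arguments have the same substance: the paper simply says ``we may assume $\mu(U_n)=2^{-n}$'' and then spends its effort on the one essential new point, totalizing the enumeration by padding with $\varnothing$ (if $\Phi(X,n,m')$ has not halted by stage $s$, output the empty basic open set); your deficiency bookkeeping $d_s=r-\mu(V[s])$ is an explicit, relativized proof of that ``may assume,'' and it is sound, though two small repairs are needed: $r$ is only given via a fast Cauchy sequence, so the membership test ``$\mu(U_n[s])\in[\mathrm{tgt}_s-2^{-s},\mathrm{tgt}_s]$'' is not decidable and must be replaced by aiming at a rational approximation kept strictly below $\mathrm{tgt}_s$ (clopen measures are dyadic, so this is routine), and the stagewise enumeration $V[s]$ must itself be made total for arbitrary oracles by exactly the paper's padding trick, which your phrase ``finite search after finitely many oracle queries'' is silently invoking (likewise the fast-Cauchy code for $r$ at bad oracles needs the folklore repair of Lemma~\ref{lem:tt-to-uniform}). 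Where you genuinely differ is the separation. The paper takes a Turing complete Schnorr trivial $B$ \cite{Downey2004a} and a Schnorr random $A\equiv_T\emptyset'$ \cite{Nies:2005nx}: Franklin and Stephan's lowness result \cite{Franklin:2010ys} gives that $A$ is Schnorr random uniformly relative to $B$, while $B\ge_T A$ destroys relative Schnorr randomness outright. You instead combine the Merkle et al.\ counterexample \cite{Merkle:2006vn} with Theorem~\ref{thm:VL-tt-Schnorr}, plus the observation (worth stating explicitly) that Schnorr randomness of $A\oplus B$ passes to $B\oplus A$ via the computable measure-preserving swap. Your route is shorter and avoids all Schnorr triviality machinery, but it makes the proposition depend on the paper's main theorem; the paper's route is independent of Theorem~\ref{thm:VL-tt-Schnorr}, reuses the same witnesses as Proposition~\ref{prop:bad-mart-test}, and yields the stronger configuration $A\le_T B$.
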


\begin{proof}
We will show that ``truth-table (test) Schnorr randomness" implies relative Schnorr randomness.  (The other direction is trivial.) Take a Schnorr test $\langle U_n \rangle$ computable relative to $B$.  We may assume $\mu(U_n)=2^{-n}$.  It remains to show that a code for $\langle U_n \rangle$ is truth-table reducible to $B$.  There is a partial computable function $\Phi\colon 2^\omega \times \mathbb{N} \times \mathbb{N} \rightarrow  \mathbb{N}$  such that $\Phi(B,-,-)$ encodes $\langle U_n \rangle$.  Namely, each
$\Phi(B,n,m)$ encodes a basic open set $C^B_{n,m}$ where $U_n=\bigcup_m C^B_{n,m}$.  (Recall, we allow $\varnothing\subset2^\omega$ as a basic open set.)  Then one can modify $\Phi$ to be total by repeatedly adding $\varnothing$ to the code.  Namely, define $\Psi\colon 2^\omega \times \mathbb{N} \times \mathbb{N} \rightarrow  \mathbb{N}$ as follows. If $m$ encodes the pair $\langle m',s \rangle$, and if $\Phi(X,n,m')$ halts by stage $s$, then let $\Psi(X,n,m)=\Phi(X,n,m')$.  Otherwise let $\Psi(X,n,m)$ encode $\varnothing$.  Since $\Psi$ is total computable and $\Psi(B,-,-)$ still encodes $\langle U_n \rangle$, we have that $\langle U_n \rangle$ has a code  truth-table reducible to $B$ as desired.

The rest of the proof is similar to the previous one.  Let $B$ be a Turing complete Schnorr trivial set.  Let $A \equiv_T \emptyset'$ be some Schnorr random. Hence $A$ is Schnorr random uniformly relative to $B$.  However, since $B \geq_T A$, then $A$ is not Schnorr random relative to $B$, and hence not ``truth-table (test) Schnorr random" relative to $B$.
\end{proof}

\begin{remark}A little thought reveals what is missing in Definition~\ref{def:bad-truth-table}. It is not in general possible to extend an order $f$ truth-table reducible to $B$ to a uniform order $\langle f^X\rangle_{X\in2^\omega}$.  To do this, one would also need that the rate of growth of $f$ is truth-table reducible to $B$.  A similar phenomenon happens with Schnorr tests.  One needs not only that the measure of $\mu(U_n)$ is truth-table reducible to $B$, but that given some code $\langle C_s \rangle$ for each $U_n$ which is truth-table reducible to $B$, the rate of convergence of $\mu\left(\bigcup_{s<n} C_s \right)$ must also be truth-table reducible to $B$.  After making these changes, then both parts of Definition~\ref{def:bad-truth-table} would be equivalent to uniformly relative Schnorr randomness.
\end{remark}

\section*{Acknowledgments}
We would like to thank Rod Downey and Denis Hirschfelt for encouraging us to publish this finding, as well as Johanna Franklin and Frank Stephan for clarifying the significance of Theorem~2.2 in \citelow{Franklin:2010ys} which led to the proof of Proposition~\ref{prop:bad-mart-test}.  Last, we would like to thank the anonymous referee for many helpful corrections and suggestions.  

The first author was supported by GCOE, Kyoto University and JSPS KAKENHI (23740072).

\bibliographystyle{plain}
\bibliography{paper_van_Lam_Schnorr}

\end{document}